\newcommand{\Rmnum}[1]{\expandafter\@slowromancap\romannumeral #1@}
\begin{document}

\newtheorem{theorem}{Theorem}
\newtheorem{observation}[theorem]{Observation}
\newtheorem{corollary}[theorem]{Corollary}
\newtheorem{algorithm}[theorem]{Algorithm}
\newtheorem{definition}[theorem]{Definition}
\newtheorem{guess}[theorem]{Conjecture}
\newtheorem{claim}[theorem]{Claim}
\newtheorem{problem}[theorem]{Problem}
\newtheorem{question}[theorem]{Question}
\newtheorem{lemma}[theorem]{Lemma}
\newtheorem{proposition}[theorem]{Proposition}
\newtheorem{fact}[theorem]{Fact}

\makeatletter
  \newcommand\figcaption{\def\@captype{figure}\caption}
  \newcommand\tabcaption{\def\@captype{table}\caption}
\makeatother

\newtheorem{acknowledgement}[theorem]{Acknowledgement}

\newtheorem{axiom}[theorem]{Axiom}
\newtheorem{case}[theorem]{Case}
\newtheorem{conclusion}[theorem]{Conclusion}
\newtheorem{condition}[theorem]{Condition}
\newtheorem{conjecture}[theorem]{Conjecture}
\newtheorem{criterion}[theorem]{Criterion}
\newtheorem{example}[theorem]{Example}
\newtheorem{exercise}[theorem]{Exercise}
\newtheorem{notation}{Notation}
\newtheorem{solution}[theorem]{Solution}
\newtheorem{summary}[theorem]{Summary}

\newenvironment{proof}{\noindent {\bf
Proof.}}{\rule{3mm}{3mm}\par\medskip}
\newcommand{\remark}{\medskip\par\noindent {\bf Remark.~~}}
\newcommand{\pp}{{\it p.}}
\newcommand{\de}{\em}
\newcommand{\mad}{\rm mad}
\newcommand{\qf}{Q({\cal F},s)}
\newcommand{\qff}{Q({\cal F}',s)}
\newcommand{\qfff}{Q({\cal F}'',s)}
\newcommand{\f}{{\cal F}}
\newcommand{\ff}{{\cal F}'}
\newcommand{\fff}{{\cal F}''}
\newcommand{\fs}{{\cal F},s}
\newcommand{\s}{\mathcal{S}}
\newcommand{\G}{\Gamma}
\newcommand{\g}{\gamma}
\newcommand{\wrt}{with respect to }
\newcommand {\nk}{ Nim$_{\rm{k}} $  }

\newcommand{\q}{\uppercase\expandafter{\romannumeral1}}
\newcommand{\qq}{\uppercase\expandafter{\romannumeral2}}
\newcommand{\qqq}{\uppercase\expandafter{\romannumeral3}}
\newcommand{\qqqq}{\uppercase\expandafter{\romannumeral4}}
\newcommand{\qqqqq}{\uppercase\expandafter{\romannumeral5}}
\newcommand{\qqqqqq}{\uppercase\expandafter{\romannumeral6}}

\newcommand{\qed}{\hfill\rule{0.5em}{0.809em}}

\newcommand{\var}{\vartriangle}

\title{{\large \bf List colouring  of graphs and generalized Dyck paths }}

\author{ Rongxing Xu\thanks{Department of Mathematics, Zhejiang Normal University,  China.  E-mail: xurongxing@yeah.net}, \and Yeong-Nan Yeh\thanks{Institute of Mathematics, Academia Sinica, Taiwan.  E-mail: mayeh@math.sinica.edu.tw}  \and Xuding Zhu\thanks{Department of Mathematics, Zhejiang Normal University,  China.  E-mail: xudingzhu@gmail.com. Grant Number: NSFC 11571319.},}

\maketitle

\begin{abstract}
The Catalan numbers occur in various  counting problems  in combinatorics.  This paper reveals a  connection  between the Catalan numbers and list colouring of graphs. 
Assume $G$ is a graph  and $f:V(G) \to N$ is a mapping.
For a nonnegative integer $m$, let $f^{(m)}$ be the extension of $f$ to   the graph $ G \diamondplus \overline{K_m}$ for which $f^{(m)}(v)=|V(G)|$ for each vertex $v$ of $\overline{K_m}$. Let $m_c(G,f)$ be the minimum $m$ such that $ G \diamondplus \overline{K_m}$ is not $f^{(m)}$-choosable and
 $m_p(G,f)$ be the minimum $m$ such that $ G \diamondplus \overline{K_m}$ is not $f^{(m)}$-paintable.
 We study the parameter $m_c(K_n, f)$ and $m_p(K_n,f)$ for arbitrary mappings $f$.  
  For $\vec{x}=(x_1,x_2,\ldots,x_n)$, an $\vec{x}$-dominated path ending at $(a, b)$
  is a monotonic path   $P$  of the $a \times b$ grid  from $(0,0)$ to $(a,b)$ such that
  each vertex $(i,j)$ on $P$ satisfies $i \le x_{j+1}$. 
   Let  $\psi(\vec{x})$  be the number of  $\vec{x}$-dominated   paths ending 
   at $(x_n,n)$. By this definition, the Catalan number $C_n$ equals   $ 
   \psi((0,1, \ldots, n-1)) $.  This paper proves that if
  $G=K_n$ has vertices $v_1, v_2, \ldots, v_n$
  and $f(v_1) \le f(v_2) \le \ldots \le f(v_n)$, then
  $m_c(G,f)=m_p(G,f)=\psi(\vec{x}(f))$,
  where   $\vec{x}(f)=(x_1, x_2, \ldots, x_n)$ and  $x_i=f(v_i)-i$ for $i=1, 2,\ldots, n$. Therefore, if $f(v_i)=n$, then 
  $m_c(K_n, f)=m_p(K_n, f)$ equals the Catalan number $C_n$. 
  We also show that if $G=G_1\cup G_2 \cup \ldots \cup G_p$ is the disjoint
  union of graphs $G_1, G_2, \ldots, G_p$ and $f= f_1 \cup f_2 \cup \ldots \cup f_p$,
  then $m_c(G,f)=\prod_{i=1}^pm_c(G_i,f_i)$ and $m_p(G,f)=\prod_{i=1}^pm_p(G_i,f_i)$. This generalizes a result 
   in    [Carraher,   Loeb,  Mahoney, Puleo,  Tsai and West, Three Topics in Online List Coloring, Journal of Combinatorics, Volume 5 (2014),	Number 1, 115-130], where the case each $G_i$ is a copy of $K_1$   is considered. 
\end{abstract}

\vspace{3mm}\textbf{Keywords:} Painting game, list colouring, online list 	
colouring, join of graphs, generalized Dyck path.

\section{Introduction}

The Catalan number $C_n$  is the solution of many   counting problems  in combinatorics. In \cite{Stanley2012},   a set of exercises   describe $66$ different interpretations of the Catalan numbers. 
This paper studies   list colouring and online list colouring of graphs, and reveals a connection between the Catalan numbers and a  solution of list colouring and online list colouring problem.

We denote by $N$ the set of positive integers.
Assume $G$ is a graph and $f: V(G) \to N$ is a mapping. An {\em $f$-list assignment} of $G$ is a list assignment $L$ of $G$ which assigns
to each vertex $v$ a set $L(v)$ of $f(v)$ colours. For a list assignment $L$ of $G$, we say $G$ is {\em $L$-colourable} if there is a
proper colouring $\phi$ of $G$ such that $\phi(v) \in L(v)$ for each vertex $v$. We say $G$ is {\em  $f$-choosable} if $G$ is
$L$-colourable for any $f$-list assignment $L$ of $G$. We say $G$ is {\em $k$-choosable} if
$G$ is $f$-choosable for the constant function $f \equiv k$. The {\em choice number} of $G$, denoted by $ch(G)$, is the minimum
integer $k$ for which $G$ is $k$-choosable.

For a mapping $f: V(G) \to N$,
the {\em $f$-painting game} is played by two players: Lister
and Painter. Initially, each vertex $v$ is assigned $f(v)$ tokens and no vertex is coloured. In each round, Lister
chooses a set $M$ of uncoloured vertices and removes one token from each chosen vertex. Painter chooses an independent set $I$
of $G$ contained in $M$ and colours every vertex in $I$. If at the end of some round, there is an uncoloured vertex with no tokens
left, then Lister wins the game. Otherwise at the end of some round, all vertices are coloured and Painter wins the game.
We say $G$ is {\em $f$-paintable} if Painter has a winning strategy in this game. We say $G$ is {\em $k$-paintable} if
$G$ is $f$-paintable for the constant function $f \equiv k$ (i.e., $f(v)=k$ for all $v \in V(G)$). The {\em paint number} of $G$, denoted by $\chi_P(G)$, is the minimum
integer $k$ for which $G$ is $k$-paintable. The list colouring and the painting game
(also known as online list colouring) of graphs have been studied extensively in the literature \cite{erdos1979,tuza1997,Kozik2002,Schauz2009,vizing1976,zhu2009}.

Assume $M$ is a subset of $V(G)$. We denote by $\delta_M$ the characteristic function of $M$, i.e, $\delta(v)=1$ if $v \in M$ and $\delta(v)=0$ if
$v \notin M$. As our proofs use induction, we shall frequently use the following   recursive  definition of $f$-paintability.
\begin{definition}
\label{def1}
Assume $f: V(G) \to N$ is a mapping. Then $G$ is $f$-paintable if and only if
\begin{enumerate}
	\item  either $E(G)=\emptyset$  and $f(v)\ge 1$ for each $v\in V$,
	\item or $\forall M \subseteq V(G)$, there exists an independent set $I \subseteq M $, such that $G-I$ is $(f-\delta_M)$-paintable.
\end{enumerate}
\end{definition}

It follows easily from the definition (\cite{zhu2009}) that if $G$ is $f$-paintable, then $G$ is $f$-choosable. The converse is not  true. The $f$-painting game on $G$ is also called an {\em online list colouring} of $G$, as each vertex $v$ eventually
have $f(v)$ chances to be coloured, where each chance can be viewed as a permissible colour for $v$, and the goal of Painter to colour
all the vertices of $G$ with their permissible colours. However, Painter needs to colour the vertices online, i.e., before
knowing the full list assignment.

Given a graph $G$ and a mapping $f:V(G) \to N$, it is rather
difficult to determine whether $G$ is $f$-choosable or not (respectively, $f$-paintable or not),
even if $G$ has very simple structure.

Nevertheless, for a complete bipartite graph $K_{n,m}$, there is one type of functions $f$ for which there
is a simple characterization of   functions $f$ for which $K_{n,m}$ is $f$-choosable and $f$-paintable.

\begin{theorem} {\rm \cite{james2013}}
	\label{thm1}
	Assume $G=K_{n,m}$ is a complete bipartite graph with partite sets $A=\{v_1,v_2,\ldots, v_n\}$ and $B=\{u_1,u_2,\ldots,u_m \}$.
	If $f: V(G) \to N$ is a mapping such that $f(u_i)=n$ for each vertex $u_i \in B$, then the following are equivalent:
	\begin{enumerate}
		\item $G$ is $f$-choosable.
		\item $G$ is $f$-paintable.
		\item $m < \prod\limits_{i=1}^nf(v_i)$.
	\end{enumerate}
\end{theorem}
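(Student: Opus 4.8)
The plan is to prove the cycle of implications $(2)\Rightarrow(1)\Rightarrow(3)\Rightarrow(2)$. The step $(2)\Rightarrow(1)$ is exactly the general fact quoted in the text, that paintability implies choosability, so nothing is required there. For $(1)\Rightarrow(3)$ I would argue the contrapositive by building an explicit bad list assignment, and for $(3)\Rightarrow(2)$ I would hand Painter a concrete winning strategy controlled by an integer potential.

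For $\neg(3)\Rightarrow\neg(1)$, assume $m\ge\prod_{i=1}^n f(v_i)$. Give $v_1,\dots,v_n$ pairwise disjoint colour lists $L(v_i)$ with $|L(v_i)|=f(v_i)$. A proper colouring of $A$ picks one colour from each list, and since the lists are disjoint this is exactly a transversal; there are $\prod_{i=1}^n f(v_i)$ of them, each a set of exactly $n$ colours. Enumerate them as $T_1,\dots,T_{\prod f(v_i)}$ and set $L(u_j)=T_j$ for $j\le\prod f(v_i)$ (legal since $|T_j|=n=f(u_j)$), with the remaining $u_j$ given arbitrary lists. In any attempted colouring the colours used on $A$ form some transversal $T=T_j$; then every colour of $L(u_j)=T_j$ already appears on a neighbour of $u_j$, so $u_j$ cannot be coloured and $G$ is not $L$-colourable.

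For $(3)\Rightarrow(2)$ I would prove a stronger statement allowing arbitrary tokens on $B$. Describe a position by tokens $t_1,\dots,t_{n'}$ on the uncoloured $A$-vertices and $s_j$ on the uncoloured $B$-vertices, call $u_j$ \emph{tight} if $s_j=n'$, and set $\Psi=\prod_{i=1}^{n'}t_i-\#\{\text{tight }u_j\}$. Painter maintains (i) $s_j\ge n'$ for every uncoloured $u_j$ and (ii) $\Psi\ge1$. Both hold initially, since $s_j=n$, every $u_j$ is tight, and $\Psi=\prod f(v_i)-m\ge1$ is precisely $(3)$. When the game empties $A$ (so $n'=0$), invariant (ii) reads $1-\#\{s_j=0\}\ge1$, hence no uncoloured $u_j$ is dead and the remaining edgeless graph is won; as the total token budget $\sum_v f(v)$ drops each round, it suffices to preserve (i) and (ii) against any move $M=M_A\cup M_B$.

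I would let Painter respond by cases on $|M_A|$. If $|M_A|\ge2$, colour all of $M_A$: then $n'$ drops by at least $2$, every surviving $u_j$ has $s_j>n'$ so the tight count is $0$, and $\Psi$ equals the new $A$-product $\ge1$. If $|M_A|=0$, colour every hit $u_j$ with $s_j\le n'+1$: this preserves (i) and deletes exactly the vertices that could lower the tight count, so $\Psi$ does not decrease. The delicate case is $|M_A|=1$, where the single hit $A$-vertex has token $t_1$ and Painter chooses between colouring it (response $a$) and, when $t_1\ge2$, leaving it to lose a token while colouring $\{u_j\in M_B:s_j\le n'+1\}$ (response $b$). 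Writing $Q=\prod_{i\ge2}t_i$, letting $T$ be the old tight count and $h$ the number of hit tight $B$-vertices, the two outcomes are $\Psi_a=Q-h$ and $\Psi_b=(t_1-1)Q-(T-h)$. I expect this case to be the main obstacle, and it is resolved by the identity $\Psi_a+\Psi_b=t_1Q-T=\Psi_{\mathrm{old}}\ge1$: since $\Psi_a,\Psi_b$ are integers their sum being positive forces one of them to be $\ge1$, and when $t_1=1$ only response $a$ is available, where $\Psi_a=Q-h\ge Q-T=\Psi_{\mathrm{old}}\ge1$. Checking invariant (i) in each branch (Painter colours at least one $A$-vertex or the offending $B$-vertex whenever a tight $u_j$ is hit) is routine.
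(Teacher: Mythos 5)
Your proposal is correct, but it takes a genuinely different route from the paper. The paper never proves Theorem~\ref{thm1} directly: the result is cited from \cite{james2013}, and the paper's own machinery recovers it as the special case of Theorem~\ref{th0} in which each $G_i$ is a copy of $K_1$ (so that $m_c(K_1,f_i)=m_p(K_1,f_i)=f_i(v)$ and $K_{n,m}=\overline{K_n}\diamondplus\overline{K_m}$), the choosability half going through the invariant $\kappa(G,L)$ of Lemma~\ref{lem0} and Corollary~\ref{cor2}, and the paintability half through conditions (1) and (2) of Lemma~\ref{lem3} and the multiplicativity induction of Lemma~\ref{lem4}. Your $(1)\Rightarrow(3)$ step (pairwise disjoint lists on $A$, the $\prod_{i=1}^n f(v_i)$ transversals assigned as lists on $B$) is the standard construction and is the exact analogue of the encoding device in Lemma~\ref{lem2}, where colourings of $K_n$ are encoded as sets and handed to the $\overline{K_m}$-vertices as lists. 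Your $(3)\Rightarrow(2)$ step is where you genuinely diverge: instead of factoring the game into one-vertex games, you give a direct Painter strategy governed by the potential $\Psi=\prod_{i=1}^{n'}t_i-\#\{\text{tight }u_j\}$ together with the invariant $s_j\ge n'$. I checked the case analysis and it is sound: for $|M_A|\ge 2$ colouring all of $M_A$ kills all tight vertices and leaves $\Psi\ge 1$; for $|M_A|=0$ colouring the hit $u_j$ with $s_j\le n'+1$ removes precisely the vertices that would violate the invariant or become tight; and in the critical case $|M_A|=1$ the identity $\Psi_a+\Psi_b=t_1Q-T=\Psi_{\mathrm{old}}\ge 1$ correctly forces one playable branch, with the boundary situations $t_1=1$ (where $h\le T$ gives $\Psi_a\ge\Psi_{\mathrm{old}}$, and indeed $\Psi_{\mathrm{old}}\ge 1$ forces $T=0$ when additionally $n'=1$) and $n'-1=0$ (empty product equal to $1$) both coming out right, while termination follows since the total token count strictly decreases each round. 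What your approach buys is a self-contained, elementary proof of the bipartite case with an explicit strategy and an exact win/loss threshold visible in the potential; what the paper's approach buys is generality, since Lemma~\ref{lem4} and Corollaries~\ref{cor4} and~\ref{cor5} give $m_c$ and $m_p$ multiplicativity over disjoint unions of \emph{arbitrary} graphs, of which Theorem~\ref{thm1} is only the edgeless special case.
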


For graphs $G$ and $H$, the {\em join} of $G$ and $H$, denoted by $G \diamondplus H$, is the graph obtained from the disjoint
union of $G$ and $H$ by adding edges connecting every vertex of $G$ to every 
vertex of $H$. Let $\overline{K_n}$ be the edgeless graph
on $n$ vertices. Then $K_{n,m} = \overline{K_n} \diamondplus \overline{K_m}$.
The following result is a consequence of Theorem \ref{thm1}:

\begin{corollary}
\label{cor1}
For any graph $G$ and any mapping $f: V(G) \to N$, there is an integer $m_0$ such that  if $m \ge m_0$ and $f$ is extended to
$G'=G \diamondplus \overline{K_m}$  with $f(v)=|V(G)|$ for every vertex $v$ of $\overline{K_m}$, then $G'$ is not $f$-choosable.
\end{corollary}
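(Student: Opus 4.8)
The plan is to reduce the statement to Theorem \ref{thm1} by passing to a spanning subgraph. Write $n=|V(G)|$ and label the vertices of $G$ as $v_1,\ldots,v_n$. First I would observe that inside $G'=G\diamondplus\overline{K_m}$ the edges split into two groups: the edges of $G$ itself, and the edges joining each vertex of $G$ to each vertex of $\overline{K_m}$. Deleting the edges of $G$ (but keeping every vertex) leaves exactly the graph $\overline{K_n}\diamondplus\overline{K_m}=K_{n,m}$, a spanning subgraph of $G'$ whose partite set $A$ is $V(G)$ and whose partite set $B$ is $V(\overline{K_m})$.

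The next step records the standard monotonicity of choosability under edge deletion: if $H$ is obtained from $G'$ by removing edges while keeping the same vertex set, then any proper colouring of $G'$ from a given list assignment is still proper for $H$, so $f$-choosability of $G'$ forces $f$-choosability of $H$. Applying this with $H=K_{n,m}$, we conclude that if $G'$ were $f$-choosable then $K_{n,m}$ would be $f$-choosable for the same $f$. Crucially, on this $K_{n,m}$ the extended $f$ satisfies $f(u)=|V(G)|=n$ for every $u\in B$, which is precisely the hypothesis of Theorem \ref{thm1}.

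Finally I would invoke Theorem \ref{thm1}, which states that this $K_{n,m}$ is $f$-choosable if and only if $m<\prod_{i=1}^n f(v_i)$. Hence, setting $m_0=\prod_{i=1}^n f(v_i)$ (a positive integer, since $f$ takes values in $N$), for every $m\ge m_0$ the graph $K_{n,m}$ is not $f$-choosable, and therefore $G'$ is not $f$-choosable either. This establishes the corollary, in fact with the explicit bound $m_0=\prod_{i=1}^n f(v_i)$.

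There is no serious obstacle here; the only point requiring care is the direction of the monotonicity, namely that deleting edges preserves (rather than destroys) choosability, so that non-choosability of the sparser graph $K_{n,m}$ propagates up to the denser graph $G'$. Everything else is a direct appeal to Theorem \ref{thm1}.
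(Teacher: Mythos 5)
Your proposal is correct and is essentially the paper's own argument: the paper derives this corollary directly from Theorem~\ref{thm1}, noting that $m_0=\prod_{v\in V(G)}f(v)$ suffices, which is exactly your bound obtained by viewing $K_{n,m}$ as a spanning subgraph of $G\diamondplus\overline{K_m}$ (the paper leaves this monotonicity step implicit, and you correctly handle its direction). No gaps.
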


Indeed, Theorem \ref{thm1} implies that $m_0=\prod_{v \in V(G)}f(v)$ is enough. However, for some graphs, we can choose much smaller $m_0$.
For example, if $G$ is not $f$-choosable, then we can simply let $m_0=0$. This motivates the following definition.

\begin{definition}
\label{def0}
Assume $G$ is a graph and $f:V(G) \to N$ is a mapping.
Given an integer $m$,  let $f^{(m)}$ be the extension of $f$ to   the graph $ G \diamondplus \overline{K_m}$ for which $f^{(m)}(v)=|V(G)|$ for each vertex $v$ of $\overline{K_m}$.   We define $m_c(G,f)$ and $m_p(G,f)$ as follows:
\[
m_c(G,f)=\min\{m: G \diamondplus \overline{K_m} ~\text{is not $f^{(m)}$-choosable } \}
\]
\[
m_p(G,f)=\min\{m: G \diamondplus \overline{K_m} ~\text{is not $f^{(m)}$-paintable } \}
\]
\end{definition}

The following observation follows directly from the definition.
\begin{observation}
	\label{obs-f}
For any graph $G$ and mapping $f:V(G) \to N$,
	 $m_c(G,f)=0$ if and only if $G$ is not $f$-choosable, 
	 and $m_p(G,f)=0$ if and only if $G$ is not $f$-paintable.
\end{observation}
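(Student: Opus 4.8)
The plan is to reduce the statement to the degenerate case $m=0$ and to use the elementary fact that the minimum of a set of nonnegative integers equals $0$ exactly when $0$ belongs to the set. First I would record the base case of the construction: $\overline{K_0}$ is the null graph, having no vertices, so the join $G \diamondplus \overline{K_0}$ contributes no new vertices and no new edges, whence $G \diamondplus \overline{K_0} = G$. Since $\overline{K_0}$ has no vertices, there is nothing to extend, and the extension $f^{(0)}$ therefore coincides with $f$ itself.

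With this identification in hand, the equivalence for choosability is immediate. By Definition \ref{def0}, $m_c(G,f)$ is the minimum $m$ for which $G \diamondplus \overline{K_m}$ fails to be $f^{(m)}$-choosable, and this minimum is taken over nonnegative integers $m$. Because $0$ is the least nonnegative integer, $m_c(G,f)=0$ holds if and only if $0$ itself lies in the set over which the minimum is taken, i.e. if and only if $G \diamondplus \overline{K_0}$ is not $f^{(0)}$-choosable. By the observation of the previous paragraph this says precisely that $G$ is not $f$-choosable, which establishes both directions of the first equivalence at once.

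The same argument applies verbatim to the painting game: replacing ``$f^{(m)}$-choosable'' by ``$f^{(m)}$-paintable'' throughout, one gets $m_p(G,f)=0$ if and only if $G \diamondplus \overline{K_0}=G$ is not $f^{(0)}=f$-paintable, i.e. if and only if $G$ is not $f$-paintable. There is essentially no obstacle in this argument; the only point worth flagging is that the minima in Definition \ref{def0} are well defined in the first place, which is guaranteed by Corollary \ref{cor1} (the defining sets are nonempty). However, this plays no role in the present statement, since to decide whether the minimum equals $0$ we need only test membership of $0$ in each set, and that membership is exactly the condition ``$G$ is not $f$-choosable'' (respectively ``$G$ is not $f$-paintable'').
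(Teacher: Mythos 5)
Your proof is correct and matches the paper, which states this observation ``follows directly from the definition'' without further elaboration: the intended argument is exactly your reduction to $m=0$, using $G \diamondplus \overline{K_0}=G$, $f^{(0)}=f$, and the fact that a minimum over nonnegative integers equals $0$ precisely when $0$ belongs to the defining set. Your remark on well-definedness of the minima (via the paper's Corollary on non-$f$-choosability of large joins) is a sensible extra precaution but, as you note, not needed for this statement.
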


Theorem \ref{thm1} is equivalent to say that if $G$ has no edges, then
for any mapping $f:V(G) \to N$, $m_c(G,f)=m_p(G,f)=\prod_{v \in V(G)}f(v)$.
In this paper, we first study $m_c(G, f)$ and $m_p(G,f)$ for the case that $G$ is a complete graph.
This problem turns out to be related to the number of generalized Dyck paths and the Catalan numbers.
The {\em lattice graph}  $Z \times Z$ has  vertex set   $\{(i,j): i,j \in Z\}$ and
in which $(i,j)\sim (i',j')$ if either
$i=i'$ and $j'=j+1$ or $i'=i+1$ and $j=j'$, i.e., $(a',b')=(a,b)+(0,1)$ or $(a',b')=(a,b)+(1,0)$.
By a {\em lattice path} we mean a path in the grid graph in which each edge is either a {\em vertical edge} from $(i,j)$ to $(i,j+1)$
or a {\em horizontal edge} from $(i,j)$ to $(i+1,j)$. Note that by this definition,  a lattice path is   a directed path, where each edge either goes vertically up or goes horizontally to the right.

A {\em Dyck path} of semi-length $n$ is a lattice path
$P$   from $(0,0)$ to $(n,n)$ in which each vertex $(i,j) \in P$ satisfies $i \le j$.
The number of  Dyck paths of semi-length $n$ is the Catalan number $C_n=\frac{1}{n+1}{2n \choose n}$.

Assume   $\vec{x}=(x_1,x_2,\ldots,x_n)$, where each $x_i$ is a non-negative integer.
An $\vec{x}$-dominated lattice path ending at $(a,b)$ is a directed path $P$ in $S$ from $(0,0)$ to $(a,b)$ such that
each vertex $(i,j) \in P$ satisfies  $i \le x_{j+1}$.
So if $\vec{x}=(0, 1, \ldots, n-1)$, then $\vec{x}$-dominated lattice paths ending at $(n,n)$ are exactly the original
Dyck paths.

We denote by $${\cal P}(\vec{x})$$ the set of all
$\vec{x}$-dominated lattice paths ending at $(x_n,n)$, and let $$\psi(\vec{x})=|{\cal P}(\vec{x})|.$$
\begin{definition}
	\label{def2}
	Assume $K_n$ has vertex set $\{v_1, v_2, \ldots, v_n\}$ and $f: V(K_n) \to Z$
	is a mapping with $f(v_1) \le f(v_2) \le \ldots \le f(v_n)$.  Let $$\vec{x}(f)=(x_1,x_2, \ldots, x_n),$$
	where $x_{i}=f(v_i)-i$  for $i=1, 2,\ldots, n$.
\end{definition}


\begin{theorem}
	\label{thm2}
	Assume $K_n$ has vertex set $\{v_1, v_2, \ldots, v_n\}$ and $f: V(K_n) \to Z$
	is a mapping with $f(v_1) \le f(v_2) \le \ldots \le f(v_n)$. Then $$m_c(K_n,f)=m_p(K_n,f)=\psi( \vec{x}(f) ).$$ In other words,
	for an integer $m \ge 0$,  for $G=K_n \diamondplus \overline{K_m}$, the following are equivalent:
	\begin{enumerate}
		\item $G$ is $f^{(m)}$-choosable.
		\item $G$ is $f^{(m)}$-paintable.
		\item $m < \psi(\vec{x}(f))$.
	\end{enumerate}
\end{theorem}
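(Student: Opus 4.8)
The plan is to prove two implications which, together with the known fact that $f$-paintability implies $f$-choosability, give the three-way equivalence. The first, (A), is that $m\ge\psi(\vec{x}(f))$ forces $G=K_n\diamondplus\overline{K_m}$ to be not $f^{(m)}$-choosable (and hence not $f^{(m)}$-paintable); the second, (B), is that $m<\psi(\vec{x}(f))$ makes $G$ $f^{(m)}$-paintable (and hence $f^{(m)}$-choosable). Granting these, all three conditions hold exactly when $m<\psi(\vec{x}(f))$, so $m_c(K_n,f)=m_p(K_n,f)=\psi(\vec{x}(f))$.

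For (A) I would first record the shape of a proper colouring of $G$. Since $v_1,\dots,v_n$ form a clique and each leaf is adjacent to all of them, a list colouring exists if and only if the clique admits a system of distinct representatives (SDR) $C=\{c_1<\dots<c_n\}$ from its lists such that no leaf-list equals $C$ (a leaf with an $n$-element list can avoid $C$ iff its list differs from $C$, as the two sets have equal size). Lister's task is therefore to choose clique-lists with as few realizable colour-sets as possible and then spend the leaves to cover them. I would use the nested lists $L(v_k)=\{1,2,\dots,f(v_k)\}$; a Hall/greedy argument shows that $C=\{c_1<\dots<c_n\}$ is realizable iff $c_k\le f(v_k)$ for every $k$. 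The substitution $d_k=c_k-k$ then yields a bijection between these colour-sets and the sequences $0\le d_1\le\dots\le d_n$ with $d_k\le x_k=f(v_k)-k$, i.e. with the $\vec{x}(f)$-dominated paths ending at $(x_n,n)$; hence there are exactly $\psi(\vec{x}(f))$ of them. When $m\ge\psi(\vec{x}(f))$, assigning these colour-sets as the (size-$n$) lists of $\psi$ of the leaves blocks every clique-SDR, so $G$ is not $f^{(m)}$-choosable.

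For (B) --- the substantive half --- I would argue by induction through the recursive criterion for paintability (Definition \ref{def1}). The combinatorial engine is the identity
\[
\psi(\vec{x}(f)) \;=\; \psi(\vec{x}(f^{\flat})) \;+\; \psi(\vec{x}(\hat{f})),
\]
obtained by splitting the dominated paths according to whether they begin with a horizontal or a vertical step: here $f^{\flat}$ is $f$ with every clique-token decreased by one, and $\hat{f}$ is $f$ with $v_1$ deleted and the remaining tokens decreased by one. This identity mirrors Painter's binary choice in a round in which Lister presents all clique-vertices: either Painter colours no clique-vertex, reducing the clique to $f^{\flat}$ (first summand), or Painter colours $v_1$, reducing it to $\hat{f}$ (second summand). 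The aim is to maintain round by round the invariant that the number of still-uncoloured leaves is strictly smaller than $\psi$ of the current clique's vector, and to show that for every set $M$ Lister presents, one of Painter's legal responses restores this invariant for the resulting position.

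The main obstacle is that this induction does not close within the family of instances in which every leaf carries exactly $|V(K_n)|$ tokens: after a single round the surviving leaves acquire non-uniform token counts (those presented but not coloured drop by one, while colouring a clique-vertex shrinks the clique), so the position leaves the original shape. I would therefore prove a strengthened statement for $K_n\diamondplus\overline{K_m}$ with an arbitrary token function, encoding the leaves by the family $\mathcal{F}$ of their current token-requirements together with a clique-parameter $s$, and introducing a counting potential $Q(\mathcal{F},s)$ that specializes to $\psi(\vec{x}(f))$ in the base configuration while counting the same dominated paths subject to the constraints recorded by $\mathcal{F}$. The crux is to define $Q(\mathcal{F},s)$ so that it obeys a recursion $Q(\mathcal{F},s)=Q(\mathcal{F}',s')+Q(\mathcal{F}'',s'')$ realizable by a legal Painter response to \emph{every} Lister set $M$ --- not only $M=V(K_n)$ --- so that as leaves are gradually coloured or pushed to lower token-levels the budget $m<Q$ can be routed down the two branches. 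A small monotonicity lemma, that raising any token can only help Painter, lets me standardize the leaves left behind when a clique-vertex is coloured and keeps the bookkeeping finite. Once the recursion for $Q$ and its identification with the lattice-path count are established, the induction on the total number of tokens completes the proof that Painter wins whenever $m<\psi(\vec{x}(f))$.
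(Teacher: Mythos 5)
Your part (A) is correct and is essentially the paper's own argument (Lemma \ref{lem2}): nested lists $L(v_k)=\{1,\ldots,f(v_k)\}$ on the clique, the observation that a colour set $\{c_1<\cdots<c_n\}$ is realizable iff $c_k\le f(v_k)$, the shift $d_k=c_k-k$ giving the bijection with $\vec{x}(f)$-dominated paths, and leaf lists enumerating those $\psi(\vec{x}(f))$ sets.

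Part (B), however, has a genuine gap: it stalls exactly where the work begins. You correctly notice that after one round the surviving leaves can carry non-uniform token counts, but your remedy --- a strengthened induction over arbitrary leaf-token families $\mathcal{F}$ governed by a potential $Q(\mathcal{F},s)$ --- is never executed: $Q$ is not defined, and the recursion $Q(\mathcal{F},s)=Q(\mathcal{F}',s')+Q(\mathcal{F}'',s'')$ ``realizable by a legal Painter response to every Lister set $M$'' is precisely the entire difficulty of the theorem, deferred rather than proved. Moreover, the detour is unnecessary, and this is the key idea you are missing: the paper's proof of Lemma \ref{lem7} closes the induction \emph{inside} the canonical family $K_{n'}\diamondplus\overline{K_{m'}}$ with all leaves holding exactly $n'$ tokens. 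Given a move $M=U+m'$ with $U=M\cap V(K_n)$ nonempty and $i$ the least index with $v_i\in U$, Painter has only two candidate responses: colour $\{v_i\}$ alone, or colour all $m'$ presented leaves. In the first case every \emph{unpresented} leaf has $n$ tokens but degree $n-1$ in $(K_n-v_i)\diamondplus\overline{K_m}$, so Lemma \ref{lem1} deletes it, while each presented leaf has exactly $n-1=|V(K_n-v_i)|$ tokens --- the position is canonical again with $m'$ leaves; in the second case the remaining $m-m'$ leaves all still hold exactly $n$ tokens. The counting input is then not a new potential but the general level-$i$ splitting of Lemma \ref{lem5}, $\psi(\vec{x})=\psi(\vec{x}\rightarrow i)+\psi(\vec{x}\uparrow i)$, obtained by contracting the edge after the first vertex of the path at height $i$; your stated identity is only its $i=1$ instance and thus only handles moves containing $v_1$. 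Combining the splitting with the componentwise bounds $\vec{x}(f-\delta_M)\ge\vec{x}(f)\rightarrow i$ and $\vec{x}((f-\delta_M)|_{K_n-v_i})\ge\vec{x}(f)\uparrow i$ (Lemma \ref{lem6}) and the monotonicity of $\psi$, the hypothesis $m<\psi(\vec{x}(f))$ forces $m'<\psi(\vec{x}(f)\uparrow i)$ or $m-m'<\psi(\vec{x}(f)\rightarrow i)$, so one of the two canonical responses preserves the invariant and the induction on $|V(G)|$ goes through. Without either this normalization trick or an actual construction of your $Q$, your proof of the paintability direction is incomplete.
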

Observe that if  $m=0$, then $G=K_n$. In this case, $G$ is $f$-paintable if and only if $f(v_i) \ge i$, which is equivalent to $x_i \ge 0$ for each
{$1 \le i \le n$}, which in turn is equivalent to $\psi(\vec{x}) \ge 1$.

Next we consider the case that $G$ is the disjoint union of graphs.
Assume for $i=1,2,\ldots,p$, $G_i$ is a graph and $f_i:V(G_i) \to N$   is a mapping. Denote by $ G_1\cup G_2 \cup \cdots \cup G_p$ the vertex disjoint union of $G_1,G_2, \ldots, G_p$, and denote by $f_1\cup f_2 \cup \cdots \cup f_p : \cup_{i=1}^pV(G_i) \to N$ the mapping defined as $(f_1\cup f_2 \cup \cdots \cup f_p )(v)=f_i(v)$ if
$v \in V(G_i)$. The following result is a generalization of Theorem \ref{thm1}.

\begin{theorem}
	\label{th0}
	For $i=1, 2, \ldots, p$, assume $G_i$ is a graph and $f_i:V(G_i) \to N$ is a mapping which assigns to each vertex $v$ of $G_i$ a positive integer $f_i(v)$. Then we have
	\begin{equation*}
	\begin{split}
	m_c(G_1\cup G_2 \cup \cdots \cup G_p ,f_1\cup f_2 \cup \cdots \cup f_p)=\prod_{i=1}^pm_c(G_i, f_i) , \\
	m_p(G_1\cup G_2 \cup \cdots \cup G_p ,f_1\cup f_2 \cup \cdots \cup f_p)=\prod_{i=1}^pm_p(G_i, f_i).
	\end{split}
	\end{equation*}
\end{theorem}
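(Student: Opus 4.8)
The plan is to reduce to the case $p=2$ and then, writing $H_i=G_i\diamondplus\overline{K_{m_i}}$ so that each added vertex of $H_i$ carries $n_i:=|V(G_i)|$ tokens, to prove two ``product'' statements, each stated verbatim for choosability and for paintability. First I would record the (easy) monotonicity of both properties in $m$: deleting a vertex of $\overline{K_m}$ only helps Painter/the colourer, so $G\diamondplus\overline{K_m}$ being choosable and being paintable are monotone in $m$, and hence $m_c,m_p$ are genuine thresholds. It then suffices to prove, for each of the two properties, the upper statement \textbf{(U)}: if $H_1$ and $H_2$ are both \emph{not} good, then $(G_1\cup G_2)\diamondplus\overline{K_{m_1m_2}}$ is not good; and the lower statement \textbf{(L)}: if $H_1$ and $H_2$ are both good, then $(G_1\cup G_2)\diamondplus\overline{K_{(m_1+1)(m_2+1)-1}}$ is good. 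Indeed, taking $m_i=m_\bullet(G_i,f_i)$ in (U) gives $m_\bullet(G,f)\le\prod_i m_\bullet(G_i,f_i)$, while taking $m_i=m_\bullet(G_i,f_i)-1$ in (L) gives $m_\bullet(G,f)\ge\prod_i m_\bullet(G_i,f_i)$, for $\bullet\in\{c,p\}$; the general $p$ follows by induction, grouping $G_2\cup\cdots\cup G_p$.

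For \textbf{(U)} I would use disjoint colour palettes. Take a bad $f_1$-list assignment $L_1$ of $H_1$ drawn from a palette $C_1$ and a bad $f_2$-list assignment $L_2$ of $H_2$ drawn from a disjoint palette $C_2$; index the $m_1m_2$ new vertices by pairs $(u,u')\in U_1\times U_2$ (where $U_i$ is the added set of $H_i$), assign to $(u,u')$ the list $L_1(u)\cup L_2(u')$ of size $n_1+n_2=|V(G_1\cup G_2)|$, and let each vertex of $G_i$ keep its list $L_i$. If the big graph were $L$-colourable, then in the resulting colouring each cell of the $U_1\times U_2$ grid is coloured from $C_1$ or from $C_2$. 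If every ``$u$-row'' contained a $C_1$-coloured cell, the chosen $C_1$-colours would colour the $m_1$ vertices of $U_1$ while $G_1$ (coloured from $C_1$) avoids them, i.e.\ a legal colouring of $H_1$, contradicting badness of $L_1$; otherwise some $u$-row is entirely $C_2$-coloured, and that single row colours all of $U_2$ together with $G_2$, contradicting badness of $L_2$. This settles the choosability case; for paintability Lister runs the two winning strategies on the two palettes in parallel over the same grid, and the identical row/column dichotomy shows Painter loses one of the two simulated games.

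The substantial direction is \textbf{(L)}, whose difficulty is that each new vertex is adjacent to \emph{all} of $G_1\cup G_2$, so it forbids its colour to both parts at once. For choosability I would reformulate: colouring the big graph is equivalent to choosing proper colourings $\phi_1,\phi_2$ of $G_1,G_2$ with colour sets $T_1,T_2$ such that $L(w)\not\subseteq T_1\cup T_2$ for every new vertex $w$. Since $|T_i|\le n_i$ while $|L(w)|=n_1+n_2$, the swallowing event $L(w)\subseteq T_1\cup T_2$ is rigid: it forces $T_1,T_2$ to be disjoint, of full sizes $n_1,n_2$, with $T_1\sqcup T_2=L(w)$. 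The idea is then to use $H_1$-choosability to pick $\phi_1$ absorbing up to $m_1$ size-$n_1$ blocker sets, chosen so that after $T_1$ is fixed the residual threats $\{\,L(w)\setminus T_1:\ T_1\subseteq L(w)\,\}$ are at most $m_2$ \emph{distinct} size-$n_2$ sets, and then to use $H_2$-choosability to choose $\phi_2$ avoiding all of them; the count $(m_1+1)(m_2+1)-1$ should be exactly the largest number of new lists for which this calibration can always be met, one more list allowing the extremal grid of (U).

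I expect the genuine obstacle to be precisely this calibration — proving, by a pigeonhole/counting argument on how the size-$n_1$ subsets of the $\le(m_1+1)(m_2+1)-1$ lists distribute, that $\phi_1$ can always be chosen leaving at most $m_2$ distinct residual blockers — together with its transport to the online game. In the painting version $T_1,T_2$ are revealed round by round, so Painter must split the $n_1+n_2$ tokens of each new vertex into $n_1$ tokens ``charged to the $G_1$-subgame'' and $n_2$ ``charged to the $G_2$-subgame'', feed Lister's moves to the two winning strategies accordingly, and translate their colouring responses back. The delicate point, already visible in small examples, is that the two subgames may simultaneously demand that $w$ be coloured in a round where $w$ is adjacent to a $G_2$-vertex the other subgame also wants coloured; Painter must break such ties using the slack in $w$'s token count and defer, while guaranteeing that neither subgame is ever charged more than its $m_1$ (respectively $m_2$) added vertices. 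Making this routing consistent — so that each $w$ has a coherent identity in each subgame and the two budgets are never jointly exceeded — is where the real work lies, and the pair-indexing of the new vertices by $\{0,\dots,m_1\}\times\{0,\dots,m_2\}\setminus\{(0,0)\}$ is the natural bookkeeping device for it.
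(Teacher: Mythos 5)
Your choosability upper bound \textbf{(U)} is correct: the pairwise-union lists $L_1(u)\cup L_2(u')$ over the grid $U_1\times U_2$, with disjoint palettes and the row dichotomy, do give $m_c(G_1\cup G_2,f)\le m_c(G_1,f_1)\,m_c(G_2,f_2)$, and this is a sound repackaging of what the paper extracts from its exact characterization $m_c(G,f)=\min_L \kappa(G,L)$ (Lemma \ref{lem0} and Corollary \ref{cor2}). But that is the only one of the four inequalities you actually prove. For the choosability lower bound you explicitly leave the ``calibration'' open, and the missing step is not an incidence pigeonhole but the dichotomy of Lemma \ref{lem0}: for an arbitrary $(f_1\cup f_2)$-list assignment $L$ (whose palettes you cannot assume disjoint), either some $L$-colouring of $G_1\cup G_2$ repeats a colour, in which case every extension is colourable and there is nothing to prove, or every $L$-colouring is rainbow; in the latter case any $T_1\in\Phi(G_1,L_1)$ and $T_2\in\Phi(G_2,L_2)$ must be disjoint (otherwise combining the two colourings is non-rainbow), and then $(T_1,T_2)\mapsto T_1\cup T_2$ is injective, so each new list blocks at most one pair and $(m_1+1)(m_2+1)$ pairs beat $(m_1+1)(m_2+1)-1$ lists. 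Your plan of counting incidences (``every $T_1$ leaves at least $m_2+1$ residual blockers'') yields no contradiction by itself, because a single list $L(w)$ may a priori contain many members of $\Phi_1$; only the cross-disjointness rigidity, which you did not establish, reduces this to one pair per list. (You also do not treat the degenerate cases $m_\bullet=0$ and $\kappa=\infty$, which the paper handles explicitly in Lemma \ref{lem11}.)

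The paintability half is the substantive gap, and your one concrete mechanism there fails. Take $G_1=G_2=K_1$ with $f_1(v)=f_2(w)=2$, so $m_1=m_2=2$ and $m=4$: the combined game is on $K_{2,4}$ with two tokens on each grid vertex. Lister's winning strategy on each star $K_1\diamondplus\overline{K_2}$ requests the centre plus one leaf per round; under any coupling in which the combined move consists of the requested $G$-vertices together with grid vertices chosen by row/column membership, Painter simply colours $\{v,w\}$ in round one, after which every grid vertex retains at least one of its two tokens and the remaining graph is edgeless, so Painter wins the combined game even though a leaf has just died in \emph{each} simulated game. The obstruction is exactly the token mismatch you gloss over: a grid vertex carries $n_1+n_2$ tokens while its two avatars carry $n_1$ and $n_2$, so simulated Lister wins do not transfer; avoiding the trap forces Lister to stagger the two games, i.e.\ to play sequentially, which is what the paper does. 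Its Lemma \ref{lem3} characterizes $m_p(G,f)\le m$ (and $\ge m$) by the existence of a single calibrated move: a set $U\subseteq V(G_1)$ together with exactly $m'=m-m_p(G,f-\delta_U)$ new vertices, and the induction on $m=m_1m_2$ in Lemma \ref{lem4} closes via the identity $m_p(G_1,f_1-\delta_U)\,m_2=m_p(G_1\cup G_2,f-\delta_U)$. Your Painter-side ``token routing'' faces the mirror-image problem (a grid vertex cannot be pre-assigned to a subgame) and you concede it is unfinished; the paper's condition (2) of Lemma \ref{lem3} sidesteps routing entirely. As written, your proposal establishes one inequality of four, and the parallel-simulation idea for the other three cannot be repaired in its stated form.
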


 \section{Proof of Theorem \ref{thm2}}

 In this section, we assume
 \begin{itemize}
 	\item $G=K_n \diamondplus \overline{K_m}$ and the vertices of $K_n$ are $v_1, v_2, \ldots, v_n$.
 	\item $f: V(K_n) \to \{1,2,\ldots \}$ is a mapping such that
 	$f(v_1) \le f(v_2) \le \ldots \le f(v_n)$.
 	\item $f^{(m)}$ is an extension of $f$ to $G$ with $f^{(m)}(v)=n$ for each vertex $v$ of $ \overline{K_m}$.
 \end{itemize}
 
 Since $f^{(m)}$-paintable implies $f^{(m)}$-choosable, to prove Theorem \ref{thm2}, it suffices to show that
 if $m = \psi(\vec{x}(f))$, then $G$ is not $f^{(m)}$-choosable, and if $m < \psi(\vec{x}(f))$, then $G$ is $f^{(m)}$-paintable.
 We prove these as two lemmas.
 
 \begin{lemma}
 	\label{lem2}
 	If $m = \psi(\vec{x}(f))$, then $G$ is not $f^{(m)}$-choosable.
 \end{lemma}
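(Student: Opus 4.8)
The plan is to exhibit an explicit $f^{(m)}$-list assignment $L$ of $G=K_n\diamondplus\overline{K_m}$ that admits no proper colouring. Write $u_1,\dots,u_m$ for the vertices of $\overline{K_m}$, so that each $u_j$ is adjacent to all of $v_1,\dots,v_n$ and $f^{(m)}(u_j)=n$. The first observation is that the structure of $G$ makes non-colourability easy to describe: any proper colouring must assign the clique $K_n$ a set of $n$ distinct colours $S=\{c_1,\dots,c_n\}$ with $c_i\in L(v_i)$, and then each $u_j$ must receive a colour in $L(u_j)\setminus S$. Since $|L(u_j)|=n=|S|$, the vertex $u_j$ can be coloured if and only if $L(u_j)\neq S$. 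Hence $G$ is not $L$-colourable precisely when, for every set $S$ realizable as a proper colouring of $K_n$, there is some $u_j$ with $L(u_j)=S$. So it suffices to choose the lists of the $v_i$ so that the number of realizable colour sets $S$ is exactly $m=\psi(\vec x(f))$, and then let $L(u_1),\dots,L(u_m)$ run over these sets.

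The construction I would use is the simplest one: set $L(v_i)=\{1,2,\dots,f(v_i)\}$ for each $i$. Because $f(v_1)\le\cdots\le f(v_n)$, these lists are nested, and a standard Hall/greedy argument characterizes the realizable colour sets: a set $S=\{s_1<s_2<\cdots<s_n\}$ of $n$ distinct positive integers arises from some proper colouring of $K_n$ (i.e.\ its elements can be assigned bijectively to $v_1,\dots,v_n$ with the colour of $v_i$ at most $f(v_i)$) if and only if $s_i\le f(v_i)$ for every $i$. The necessity uses that $f$ is nondecreasing, counting how many vertices have threshold at least $s_k$; the sufficiency is the explicit matching $s_i\mapsto v_i$.

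It then remains to count these sets and match the count with $\psi(\vec x(f))$. I would substitute $t_i=s_i-i$. Since the $s_i$ are strictly increasing positive integers, this turns the conditions $s_1<\cdots<s_n$, $s_i\ge 1$, and $s_i\le f(v_i)=x_i+i$ into
\[
0\le t_1\le t_2\le\cdots\le t_n,\qquad t_i\le x_i\ (1\le i\le n),
\]
so realizable colour sets correspond bijectively to such nondecreasing sequences. Finally I would identify these sequences with $\vec x$-dominated lattice paths ending at $(x_n,n)$: describing a monotone path to $(x_n,n)$ by the horizontal coordinates $h_1\le\cdots\le h_n$ at which its $n$ up-steps occur, the domination condition ``$i\le x_{j+1}$ at every lattice point $(i,j)$ of the path'' is exactly $h_k\le x_k$ for all $k$. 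Thus $t_i=h_i$ is the desired bijection, and the number of realizable sets equals $\psi(\vec x(f))=m$. Assigning these $m$ sets to $L(u_1),\dots,L(u_m)$ completes the construction, and the first paragraph shows $G$ is not $L$-colourable.

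\textbf{Main obstacle.} The routine parts are the Hall characterization and the colourability bookkeeping; the real content is the clean bijection in the last paragraph between nondecreasing sequences bounded by $\vec x$ and $\vec x$-dominated paths to $(x_n,n)$, which is what forces the count to be exactly $\psi(\vec x(f))$ rather than merely bounded by it. I would also check the degenerate case $\psi(\vec x(f))=0$: then some $x_i<0$, i.e.\ $f(v_i)<i$, there are no $u$-vertices, and $G=K_n$ with the lists $\{1,\dots,f(v_i)\}$ is already not colourable, consistent with the claim.
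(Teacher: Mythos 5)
Your proof is correct and takes essentially the same route as the paper: both use the initial-segment lists $L(v_i)=\{1,\dots,f(v_i)\}$ on the clique and let the lists of the $m$ independent vertices enumerate the colour sets encoding the $\vec{x}$-dominated paths, your encoding of a path by the horizontal coordinates $h_1\le\cdots\le h_n$ of its up-steps being equivalent to the paper's encoding $s(P)$ by positions of vertical edges via $h_{j+1}=i_j-(j+1)$. The only difference is expository: you spell out the Hall-type characterization of realizable colour sets and the degenerate case $\psi(\vec{x}(f))=0$, both of which the paper leaves implicit.
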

 \begin{proof}
 	Each path $P \in {\cal P}(\vec{x}(f))$ can be encoded as a set 
 	 $s(P) $  of $n$ 
 	positive 
 	integers, where $i \in s(P)$ if and only if the $i$th edge of the
 	path $P$ is a vertical edge going up. 
 	Assume $s(P)=\{i_0, i_1, \ldots, i_{n-1}\}$, where $ i_0 <i_1 < \ldots < i_{n-1}$. Then $P \in {\cal P}(\vec{x})$ if and only if for each $j \in \{0,1,\ldots, n-1\}$, there are at most $x_{j+1}$ horizontal edges before the $(j+1)$th vertical edge. In other words,
 	$P \in {\cal P}(\vec{x})$ and if and only if for each index $j \in \{0,1, \ldots, n-1\}$,  {$i_j \le x_{j+1}+j$}.

 	Let $L$ be the $f^{(m)}$-list assignment of $G$ defined as follows:
 	\begin{itemize}
 		\item For each $v_i$ of $K_n$, let $L(v_i)=\{1,2,\ldots, f(v_i)\}$.
 		\item Since $m = \psi(\vec{x})$, there is a bijection $\pi: V(\overline{K_m}) \to {\cal P}(\vec{x})$. For each vertex $v$ of $\overline{K_m}$, let
 		$L(v)=s(\pi(v))$.
 	\end{itemize}
 	We shall show that $G$ is not $L$-colourable.
 	
 	Assume to the contrary that $c$ is an $L$-colouring of $G$. Assume 
 	 $c(K_n)=\{i_0, i_1, \ldots, i_{n-1}\}$, where $i_0<i_1<\ldots 
 	<i_{n-1}$. 
 	Since $f(v_i) \le f(v_{i+1})$ for each $i$ and $L(v_i)=\{1,2,\ldots, 
 	f(v_i)\}$, we must have $i_j \in L(v_{j+1})$ for $j=0,1,2,\ldots, n-1$. So $c(K_n)=s(P)$ for 
 	some $P \in {\cal P}(\vec{x})$. However, there is a vertex $v \in 
 	V(\overline{K_m})$ with
 	$L(v) = s(P)$. Hence there is no legal colour for $v$, contrary to the assumption that $c$ is an $L$-colouring of $G$. This completes the proof of Lemma \ref{lem2}.
 \end{proof}
 
 \begin{lemma}
 	\label{lem7}
 	If $m < \psi(\vec{x}(f))$, then $G$ is $f^{(m)}$-paintable.
 \end{lemma}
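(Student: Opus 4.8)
The plan is to prove Lemma~\ref{lem7} from the recursive criterion for paintability in Definition~\ref{def1}, arguing by induction on the total number of tokens $\sum_{v}f^{(m)}(v)=\sum_{i=1}^n f(v_i)+mn$, which drops by at least one at every (nonempty) move of Lister, the empty move being covered by the base clause of Definition~\ref{def1}. Fix a Lister move $M\subseteq V(G)$ and write $M=M_K\cup W$ with $M_K\subseteq V(K_n)$ and $W\subseteq V(\overline{K_m})$. Because $K_n$ is complete and every vertex of $\overline{K_m}$ is joined to every vertex of $K_n$, an independent set $I\subseteq M$ either consists of a single clique vertex $v_i\in M_K$, or is a set $S\subseteq W$ of vertices of $\overline{K_m}$. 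Thus Painter has exactly two kinds of reply: \emph{delete} one clique vertex $v_i\in M_K$, or \emph{colour} a subset $S\subseteq W$. In either case $G-I$ is again a join $K_{n'}\diamondplus\overline{K_{m'}}$, and the task is to show that at least one reply yields an instance to which the induction hypothesis applies.

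The engine of the argument is a recursion for $\psi$. Using the encoding in the proof of Lemma~\ref{lem2}, a path $P\in\mathcal{P}(\vec{x}(f))$ is the same thing as an increasing sequence $1\le i_0<i_1<\dots<i_{n-1}$ with $i_j\le f(v_{j+1})$ for each $j$, so that
\[
\psi(\vec{x}(f))=\#\{(i_0,\dots,i_{n-1}): 1\le i_0<\dots<i_{n-1},\ i_j\le f(v_{j+1})\ \forall j\}.
\]
Splitting this count according to whether the top entry $i_{n-1}$ equals $f(v_n)$ or is at most $f(v_n)-1$ gives the key identity
\[
\psi(\vec{x}(f))=\psi(\vec{x}(f^{\downarrow}))+\psi(\vec{x}(f|_{\{v_1,\dots,v_{n-1}\}})),
\]
where $f^{\downarrow}$ lowers $f(v_n)$ by one (this holds when $f(v_{n-1})<f(v_n)$, with an easy variant when $f(v_{n-1})=f(v_n)$). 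This is precisely the game dichotomy for a colour lying only in the list of the top vertex $v_n$: the first summand corresponds to Painter declining to colour $v_n$, so its token drops (that is, $x_n$ drops by one), and the second to Painter colouring $v_n$, so it is deleted and the clique shrinks. Together with the monotonicity $\psi(\vec{x}(f))\ge\psi(\vec{x}(f^{\downarrow}))$ under lowering a single $f(v_i)$, a multivariate form of this splitting is what lets me compare the two replies available against a general $M$.

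To run the induction I would strengthen the statement so that the vertices of the independent part are allowed to carry \emph{distinct} token values rather than the common value $n$. This is forced on us: deleting a clique vertex shrinks the clique from size $n$ to $n-1$, while only the vertices of $W$ have their tokens lowered, so the surviving independent vertices no longer all equal the current clique size. In the strengthened statement the winning condition is phrased in terms of the number of ``tight'' independent vertices (those whose token count does not exceed the current clique size), which must stay below the corresponding path count, while over‑provisioned independent vertices are handled separately since Painter can colour them as soon as Lister offers them. Granting this strengthening, the verification splits on $M$: when $M_K=\emptyset$, Painter colours all of $W$ at once (an independent set), so no surviving independent vertex loses a token and the clique is untouched, and $m'=m-|W|<\psi$ is immediate; when $M_K\neq\emptyset$, Painter deletes a suitably chosen clique vertex (or, when that fails, colours part of $W$) and checks $m'<\psi'$ via the splitting identity.

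The main obstacle is the combinatorial heart of the case $M_K\neq\emptyset$: I must show that for \emph{every} set of clique vertices that Lister may offer simultaneously, some single deletion, or else some colouring of independent vertices, preserves the invariant $m'<\psi'$. This amounts to a splitting inequality for $\psi$ under simultaneously lowering several coordinates of $\vec{x}(f)$ and deleting one of them, and it is exactly where the hypothesis $m<\psi(\vec{x}(f))$ is spent. I expect to establish it by iterating the identity above over the offered set, using monotonicity of $\psi$ together with the bookkeeping that reclassifies newly over‑provisioned independent vertices as free. The boundary cases furnish the base of the induction: $n=0$ (where $\psi=1$ forces $m=0$), and $m=0$, where the claim reduces to the observation following Theorem~\ref{thm2} that $K_n$ is $f$-paintable if and only if $\psi(\vec{x}(f))\ge 1$.
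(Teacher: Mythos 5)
Your setup is sound: induction on total tokens, the classification of Painter's replies in a join (one clique vertex, or a subset of $\overline{K_m}$), and the encoding of paths as increasing sequences $i_0<\dots<i_{n-1}$ with $i_j\le f(v_{j+1})$ all match the paper. Your splitting identity is exactly the $i=n$ case of the paper's Lemma \ref{lem5}. But the proof has a genuine gap precisely where you flag it as the ``main obstacle'': when Lister plays $M=U+m'$ with $U\neq\emptyset$ arbitrary, splitting only at the top coordinate does not align with the moves available to Painter (who can only colour a vertex \emph{in} $U$), and ``iterating the identity over the offered set'' is never carried out --- lowering several coordinates simultaneously does not decompose $\psi$ additively in any obvious way. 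The paper closes this with two precise statements you are missing. First, Lemma \ref{lem5} holds at an \emph{arbitrary} index $i$, in the asymmetric form $\psi(\vec{x})=\psi(\vec{x}\rightarrow i)+\psi(\vec{x}\uparrow i)$, where $\vec{x}\rightarrow i$ lowers the \emph{whole tail} $x_i,x_{i+1},\dots,x_n$ by one (proved by contracting the edge after the path first reaches height $i$), not just the single coordinate $x_i$. Second, Lemma \ref{lem6} shows that if $i$ is the \emph{smallest} index with $v_i\in U$, then the actual post-move vectors dominate these two: declining costs at most one token on each coordinate $j\ge i$, which is absorbed by the tail-lowering in $\vec{x}\rightarrow i$; and colouring $v_i$ shifts the indices of $v_{i+1},\dots,v_n$ down by one, so each $x_j=f(v_j)-j$ \emph{gains} one, exactly compensating the simultaneous token losses $\delta_M(v_j)\le 1$ on the rest of $U$. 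With monotonicity of $\psi$ under coordinatewise domination, $m<\psi(\vec{x}(f)\rightarrow i)+\psi(\vec{x}(f)\uparrow i)$ forces $m-m'<\psi(\vec{x}(f)\rightarrow i)$ or $m'<\psi(\vec{x}(f)\uparrow i)$, and the induction closes in one step per round --- no iteration over $U$ is needed. This choice of the smallest offered index, together with the tail-lowering form of the splitting, is the key idea your proposal lacks and cannot easily be reconstructed from the top-coordinate identity alone.

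Your proposed strengthening to non-uniform token values on the independent part is also unnecessary, and signals the same missing ingredient. You claim it is ``forced'' because deleting a clique vertex leaves surviving independent vertices with $n$ tokens facing a clique of size $n-1$; but by Lemma \ref{lem1}, a vertex with more tokens than neighbours is irrelevant and may be deleted from the game. So after Painter colours $v_i$, the untouched vertices of $\overline{K_m}$ disappear, the $m'$ touched ones have exactly $n-1$ tokens (the order of the new clique), and the position is again of the canonical form $(K_{n-1})\diamondplus\overline{K_{m'}}$ with the standard extension. The paper's induction therefore never leaves the original class of instances, whereas your ``tight/over-provisioned'' bookkeeping --- never made precise --- is compensating for a complication that does not arise. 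As written, the proposal establishes the frame of the induction but not its combinatorial heart.
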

 \begin{proof}
 	Assume $m < \psi(\vec{x}(f))$. We shall give a winning strategy for Painter in the $f^{(m)}$-painting game of $G$.
 	
 	The proof is by induction on the number of vertices of $G$.
 	If $n=m=1$, then $m < \psi(\vec{x}(f))$ implies that $f(v_1) \ge 2$. So $G$ 
 	is $f^{(m)}$-paintable.
 	Assume $n+m \ge 3$ and Lemma \ref{lem7} holds for  $K_{n'} \diamondplus 
 	\overline{K_{m'}}$ when $n'+m' < n+m$.

 	For $\vec{x}=(x_1, x_2, \ldots, x_n)$ and $\vec{y}=(y_1, y_2, \ldots, y_n)$,
 	we write $\vec{x} \le \vec{y}$ if $x_i \le y_i$ for $i=1,2,\ldots, n$.
 	It follows from the definition that if $\vec{x} \le \vec{y}$, then $ {\cal 
 	P}(\vec{x}) \subseteq  {\cal P}(\vec{y})$
 	and hence $\psi(\vec{x}) \le \psi(\vec{y})$.

 	For $1 \le i \le n$, let
 	\begin{eqnarray*}
 		\vec{x}\rightarrow i&=&(x_1, x_2, \ldots, x_{i-1}, x_i-1, \ldots, x_n-1),  \\
 		\vec{x}\uparrow i &=& (x_1,x_2,\ldots, x_{i-1}, x_{i+1}, x_{i+2}, \ldots, x_n).
 	\end{eqnarray*}

 	\begin{lemma}
 		\label{lem5}
 		For any  $\vec{x}=(x_1,x_2,\ldots,x_n)$ with $x_i \ge 1$ for each $i$, and for any $1 \le i \le n$, we have
 		$$\psi(\vec{x}) = \psi(\vec{x}\rightarrow i) + \psi(\vec{x} \uparrow i).$$
 	\end{lemma}

 	\begin{proof} Let $i$ be a fixed index such that {$1 \le i \le n$}.
 		For each path $$P =((i_0,j_0), (i_1,j_1), \ldots, (i_{n+x_n}, j_{n+x_n})) \in   {\cal P}(\vec{x}),$$
 		let $t_P$ be the smallest index such that $j_{t_P}=i$. We say $P$ is of Type \q \ (respectively, of Type \qq)
 		if the edge following the vertex $(i_{t_P}, j_{t_P})$ is a horizontal edge (respectively, a vertical edge).
 		
 		Let ${\cal P}_1$ ( respectively, ${\cal P}_2$) be the set of Type \q \ (respectively, Type \qq) paths in ${\cal P}(\vec{x})$. Then
 		$${\cal P}(\vec{x}) = {\cal P}_1 \cup {\cal P}_2$$ and
 		$$\psi(\vec{x})=|{\cal P}_1|+| {\cal P}_2|.$$
 		
 		For $P \in  {\cal P}(\vec{x})$, let $P'$ be the lattice path obtained from $P$ by contracting the edge of $P$ following the vertex $(i_{t_P}, j_{t_P})$. It is straightforward to verify that $P' \in {\cal P}(\vec{x} \rightarrow i)$ if and only if $P$ is of Type I and
 		$P' \in {\cal P}(\vec{x} \uparrow i)$ if and only if $P$ is of Type II.
 		Therefore 	  $$\psi(\vec{x})=|{\cal P}_1|+| {\cal P}_2| = | {\cal P}(\vec{x} \rightarrow i)| +|  {\cal P}(\vec{x} \uparrow i)|.$$
 	\end{proof}
 	
 	In   Figure \ref{fig1} below, the thin  black path is  of Type \q, the blue   path is obtained from the thin black path by contracting the edge following the vertex $(i_{t_P}, j_{t_P})$. The thick  black path is  of Type II, the red   path is obtained from the thick black path by contracting the edge following the vertex $(i_{t_P}, j_{t_P})$.
 	
 	\begin{figure}[H]
 		\setlength{\unitlength}{4mm}
 		$$ \begin{picture}(30,20)(0,0)
 		\put(0,0){\vector(1,0){30}}
 		\put(31,-.05){$x$}
 		\put(0,0){\vector(0,1){20}}
 		\put(0,21){\makebox(0,0){$y$}}
 		\put(-1,-1){(0,0)}
 		\linethickness{.075mm}
 		\multiput(0,0)(1,0){30}
 		{\line(0,1){19}}
 		\multiput(0,0)(0,1){20}
 		{\line(1,0){29}}

 		\linethickness{.4mm}
 		\put(12,0){\line(0,1){0.18}}
 		\put(11.5,-1){$i_{t_P}$}
 		\put(11.75,7.75){$\bullet$}
 		\put(12.75,7.75){$\bullet$}
 		\put(11.75,8.75){$\bullet$}
 		\linethickness{.4mm}
 		\put(0,8){\line(1,0){0.18}}
 		\put(-1,7.7){$i$}
 		
 		\put(11.8,8.3){\line(1,1){.4}}
 		\put(11.8,8.7){\line(1,-1){.4}}
 		\put(12.3,7.8){\line(1,1){.4}}
 		\put(12.3,8.2){\line(1,-1){.4}}
 		\put(27.8,18.8){\line(1,1){.4}}
 		\put(27.8,19.2){\line(1,-1){.4}}
 		\put(28.8,17.8){\line(1,1){.4}}
 		\put(28.8,18.2){\line(1,-1){.4}}
 		
 		\linethickness{.4mm}\color{black}
 		\put(0,0){\line(0,1){3}}
 		\linethickness{.4mm}\color{black}
 		\put(0,3){\line(1,0){3}}
 		\linethickness{.4mm}\color{black}
 		\put(3,3){\line(0,1){2}}
 		\linethickness{.4mm}\color{black}
 		\put(3,5){\line(1,0){7}}
 		\linethickness{.4mm}\color{black}
 		\put(10,5){\line(0,1){2}}
 		\linethickness{.4mm}\color{black}
 		\put(10,7){\line(1,0){2}}
 		\linethickness{.4mm}\color{black}
 		\put(12,8){\line(1,0){8}}
 		\linethickness{.4mm}\color{black}
 		\put(20,8){\line(0,1){3}}
 		\linethickness{.4mm}\color{black}
 		\put(20,11){\line(1,0){4}}
 		\linethickness{.4mm}\color{black}
 		\put(24,11){\line(0,1){4}}
 		\linethickness{.4mm}\color{black}
 		\put(24,15){\line(1,0){2}}
 		\linethickness{.4mm}\color{black}
 		\put(26,15){\line(0,1){4}}
 		\linethickness{.4mm}\color{black}
 		\put(26,19){\line(1,0){3}}
 		
 		\linethickness{.8mm}\color{black}
 		\put(0,0){\line(1,0){2.1}}
 		\linethickness{.8mm}\color{black}
 		\put(2,0){\line(0,1){2.1}}
 		\linethickness{.8mm}\color{black}
 		\put(2,2){\line(1,0){3.1}}
 		\linethickness{.8mm}\color{black}
 		\put(5,2){\line(0,1){2.1}}
 		\linethickness{.8mm}\color{black}
 		\put(5,4){\line(1,0){2.1}}
 		\linethickness{.8mm}\color{black}
 		\put(7,4){\line(0,1){2.1}}
 		\linethickness{.8mm}\color{black}
 		\put(7,6){\line(1,0){5.1}}
 		\linethickness{.8mm}\color{black}
 		\put(12,6){\line(0,1){4.1}}
 		\linethickness{.8mm}\color{black}
 		\put(12,10){\line(1,0){3.1}}
 		\linethickness{.8mm}\color{black}
 		\put(15,10){\line(0,1){5.1}}
 		\linethickness{.8mm}\color{black}
 		\put(15,15){\line(1,0){5.1}}
 		\linethickness{.8mm}\color{black}
 		\put(20,15){\line(0,1){3.1}}
 		\linethickness{.8mm}\color{black}
 		\put(20,18){\line(1,0){9.1}}
 		\linethickness{.8mm}\color{black}
 		\put(29,18){\line(0,1){1}}

 		\linethickness{.4mm}\color{red}
 		\put(12,8){\line(0,1){1}}
 		\linethickness{.4mm}\color{red}
 		\put(12,9){\line(1,0){3}}
 		\linethickness{.4mm}\color{red}
 		\put(15,9){\line(0,1){5}}
 		\linethickness{.4mm}\color{red}
 		\put(15,14){\line(1,0){5}}
 		\linethickness{.4mm}\color{red}
 		\put(20,14){\line(0,1){3}}
 		\linethickness{.4mm}\color{red}
 		\put(20,17){\line(1,0){9}}
 		\linethickness{.4mm}\color{red}
 		\put(29,17){\line(0,1){1}}
 		
 		\linethickness{.3mm}\color{blue}
 		\put(12,8){\line(1,0){7}}
 		\linethickness{.3mm}\color{blue}
 		\put(19,8){\line(0,1){3}}
 		\linethickness{.3mm}\color{blue}
 		\put(19,11){\line(1,0){4}}
 		\linethickness{.3mm}\color{blue}
 		\put(23,11){\line(0,1){4}}
 		\linethickness{.3mm}\color{blue}
 		\put(23,15){\line(1,0){2}}
 		\linethickness{.3mm}\color{blue}
 		\put(25,15){\line(0,1){4}}
 		\linethickness{.3mm}\color{blue}
 		\put(25,19){\line(1,0){3}}
 		
 		\color{black}
 		\put(29,19){$(x_n, n)$}
 		\end{picture}
 		$$
 		\bigskip
 		\caption{}\label{fig1}
 	\end{figure}

 	\begin{lemma}
 		\label{lem6}
 		Assume $M$ is a subset of $V(K_n)$  and $i$ is the smallest index such 
 		that $v_i \in M$.
 		Then $\vec{x}((f-\delta_M) ) \ge \vec{x}(f) \rightarrow i$ and 
 		$\vec{x}((f-\delta_M)|_{K_n-v_i}) \ge   \vec{x}(f) \uparrow i$.
 	\end{lemma}
 	
 	\begin{proof}
 		First recall that
 		\begin{eqnarray*}
 			\vec{x}\rightarrow i&=&(x_1, x_2, \ldots, x_{i-1}, x_i-1, \ldots, x_n-1),  \\
 			\vec{x}\uparrow i &=& (x_1,x_2,\ldots, x_{i-1}, x_{i+1}, x_{i+2}, \ldots, x_n).
 		\end{eqnarray*}
 		We may assume that
 		\begin{eqnarray*}
 			\vec{x}((f-\delta_M)|_{K_n}) &=& (y_1, y_2, \ldots, y_n), \\
 			\vec{x}((f-\delta_M)|_{K_n-v_i}) &=& (z_1, z_2, \ldots, z_{n-1}).
 		\end{eqnarray*}
 		
 		Since $i$ is the smallest index such that $v_i \in M$,
 		by Definition \ref{def2} , we have that
 		\begin{itemize}
 			\item For any $1 \le j \le i-1$, $y_j=z_j=x_j$.
 			\item For any $i \le j \le n$, 
 			\begin{eqnarray*}
 				y_j &=&(f-\delta_M)(v_j)-j \\
 				&=& f(v_j)-j-\delta_M(v_j) \\
 				&=& x_j-\delta_M(v_j) \\
 				&\ge & x_j-1.
 			\end{eqnarray*}
 			\item For any $i \le j \le n-1$, \begin{eqnarray*}
 				z_j &=&(f-\delta_M)(v_{j+1})-j \\
 				&=& f(v_{j+1})-j-\delta_M(v_{j+1}) \\
 			    &=&  x_{j+1}+1-\delta_M(v_{j+1}) \\
 				&\ge & x_{j+1}.
 			\end{eqnarray*}
 		\end{itemize}
 		Hence
 		$$\vec{x}((f-\delta_M)|_{K_n}) \ge \vec{x}(f) \rightarrow i,$$ and $$\vec{x}((f-\delta_M)|_{K_n-v_i}) \ge  \vec{x}(f) \uparrow i.$$
 	\end{proof}
 	
 	Lemma \ref{lem1} below follows easily from the definitions and is well-known (cf \cite{zhu2009}).
 	
 	\begin{lemma}
 		\label{lem1}
 		For any graph and mapping $f: V(G) \to N$, if $v \in V(G)$ and $f(v) > 
 		d_G(v)$, then $G$ is $f$-paintable if and only if $G-v$ is 
 		$f$-paintable.
 	\end{lemma}
 	
 	To prove that $G$ is $f^{(m)}$-paintable, it suffices to show that for any  
 	subset $M=U+m'$ of $V(G)$ (i.e., $U=M \cap V(K_n)$ and $M$
 	contains $m'$ vertices of $\overline{K_m}$), there is an independent set $I$ of $G$ contained in $M$ such that $G-I$ is
 	$(f^{(m)}-\delta_M)$-paintable. If $U = \emptyset$, then let $I=M$, and by 
 	induction hypothesis, $G-M$ is $f^{(m-m')}$-paintable.
 	
 	Assume $U \ne \emptyset$.
 	Let $i$ be the smallest index such that $v_i \in U$.	
 	By Lemma \ref{lem5}, $\psi(\vec{x}(f))=\psi(\vec{x}(f) \rightarrow i) + \psi(\vec{x}(f) \uparrow i)$.
 	Since $m < \psi(\vec{x}(f))$, we have either
 	$m' < \psi(\vec{x}(f) \uparrow i)$ or $m-m' < \psi(\vec{x}(f) \rightarrow i)$.
 	
 	If $m' < \psi(\vec{x}(f) \uparrow i)$, then Painter colours $v_i$. The 
 	remaining game is the $(f-\delta_M)^{(m')}$-painting game on $(K_n-v_i) 
 	\diamondplus \overline{K_{m'}}$. As $\vec{x}((f-\delta_M)|_{K_n-v_i}) \ge 
 	\vec{x}(f) \uparrow i$, by induction hypothesis, $m_p(K_n-v_i, (f - 
 	\delta_M)|_{K_n-v_i} ) > m'$
 	and hence Painter has a winning strategy on the $(f-\delta_M)^{(m')}$-painting game on $(K_n-v_i) \diamondplus \overline{K_{m'}}$.
 	
 	If $m-m' < \psi(\vec{x}(f) \rightarrow i)$, then Painter colours $M \cap V(\overline{K_m})$. The remaining game is the $(f-\delta_M)^{(m-m')}$-painting game on $K_n  \diamondplus \overline{K_{m-m'}}$. As $\vec{x}((f-\delta_M)|_{K_n}) \ge \vec{x}(f) \rightarrow i$, by induction hypothesis, $m_p(K_n, f -\delta_M ) > m-m'$
 	and hence Painter has a winning strategy on the  the $(f-\delta_M)^{(m-m')}$-painting game on $K_n \diamondplus \overline{K_{m-m'}}$.
 \end{proof}

 
 The number of $\vec{x}$-dominated paths is thoroughly studied in the literature and
 $\psi(\vec{x})$ is known to be the determinant of a matrix whose entries are determined by $\vec{x}$. First we have the following observation.

 \begin{observation}
 	\label{obs}
 	Given a vector $\vec{x}= (x_1, x_2, \ldots, x_n)$, if $ x_i > x_{i+1}$,
 	then let $$\vec{x}'=(x_1,\ldots, x_{i-1}, x_{i+1},x_{i+1},\ldots, x_n),$$
 	we have ${\cal P}(\vec{x})={\cal P}(\vec{x}')$.
 \end{observation}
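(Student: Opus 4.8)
The plan is to read $\vec{x}'$ as the result of lowering a single coordinate of $\vec{x}$, so that one inclusion comes for free from monotonicity and only the reverse inclusion needs work. Since $x_i > x_{i+1}$, the vector $\vec{x}'=(x_1,\ldots,x_{i-1},x_{i+1},x_{i+1},\ldots,x_n)$ agrees with $\vec{x}$ in every coordinate except the $i$th, where $x'_i=x_{i+1}<x_i$, and in particular their last coordinates coincide, so both families of paths end at the same point $(x_n,n)$. Thus $\vec{x}'\le\vec{x}$, and by the monotonicity remark already recorded in the proof of Lemma~\ref{lem7} (namely $\vec{x}'\le\vec{x}$ implies $\mathcal{P}(\vec{x}')\subseteq\mathcal{P}(\vec{x})$), the inclusion $\mathcal{P}(\vec{x}')\subseteq\mathcal{P}(\vec{x})$ holds with no further argument.

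For the reverse inclusion I would fix an arbitrary $P\in\mathcal{P}(\vec{x})$ and verify that $P$ is also $\vec{x}'$-dominated. Because $\vec{x}$ and $\vec{x}'$ differ only in coordinate $i$, the only domination constraints that can possibly differ are those governed by that coordinate: recalling that the condition reads $a\le x_{b+1}$ for each vertex $(a,b)\in P$, the $i$th coordinate controls exactly the vertices at height $b=i-1$, where $\vec{x}'$ now demands the tighter bound $a\le x'_i=x_{i+1}$. Every other vertex satisfies literally the same inequality under $\vec{x}$ and $\vec{x}'$, so the whole problem reduces to bounding the $x$-coordinate of the height-$(i-1)$ vertices of $P$ by $x_{i+1}$.

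The decisive step is then a one-line monotonicity argument. Since $x_i>x_{i+1}$ forces $i\le n-1$, the path $P$, which terminates at height $n$, must rise from height $i-1$ to height $i$; letting $(a^*,i-1)$ be the last vertex of $P$ at height $i-1$, its successor on $P$ is the vertical step to $(a^*,i)$, and by monotonicity every height-$(i-1)$ vertex $(a,i-1)$ of $P$ satisfies $a\le a^*$. Applying the $\vec{x}$-domination of $P$ to the vertex $(a^*,i)$, whose height $i$ makes the governing coordinate $x_{i+1}$, yields $a^*\le x_{i+1}$, and hence $a\le a^*\le x_{i+1}=x'_i$ for every such vertex. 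This is precisely the $\vec{x}'$-constraint, so $P\in\mathcal{P}(\vec{x}')$, giving $\mathcal{P}(\vec{x})\subseteq\mathcal{P}(\vec{x}')$ and therefore equality.

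The main obstacle is not a deep one but an index-bookkeeping one: one must keep straight that the coordinate $x_i$ governs vertices of height $i-1$ (the constraint shifts the index by one), and one must be sure that a height-$i$ successor of a height-$(i-1)$ vertex really lies on $P$, which is exactly where the inequality $i\le n-1$ is invoked to guarantee that the path still climbs above height $i-1$. Once these points are pinned down the two inclusions close up immediately, and the conclusion is the set identity $\mathcal{P}(\vec{x})=\mathcal{P}(\vec{x}')$ claimed in the statement.
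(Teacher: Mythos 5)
Your proof is correct and takes essentially the same approach as the paper: the inclusion ${\cal P}(\vec{x}')\subseteq{\cal P}(\vec{x})$ is immediate since $\vec{x}'\le\vec{x}$ with equal last coordinates, and for the converse both arguments rest on the same key point, namely that the up-step leaving height $i-1$ lands at height $i$, where the bound $x_{i+1}$ applies and propagates back (by monotonicity) to every vertex of $P$ at height $i-1$. The only difference is presentational: the paper argues contrapositively (a vertex $(a,i-1)$ with $a>x_{i+1}$ would block all further up edges, so $P$ could not reach $(x_n,n)$), whereas you argue directly via the last height-$(i-1)$ vertex $(a^*,i-1)$ and its vertical successor $(a^*,i)$.
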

 
 \begin{proof}
 	It is obvious that ${\cal P}(\vec{x}') \subseteq {\cal P}(\vec{x})$.
 	On the other hand, for any $P \in {\cal P}(\vec{x})$, if $(a,i-1) \in P$, then we must have
 	$a \le x_{i+1}$, for otherwise, after arriving at vertex $(a,i-1)$, $P$ cannot have any up edge and hence cannot reach the vertex
 	$(x_n,n)$. So $P \in {\cal P}(\vec{x}')$.
 \end{proof}
 
 We say $\vec{x}$ is   {\em reduced} if $x_1 \le x_2 \le \ldots \le x_n$.
 For any vector $\vec{x}$, let $\vec{x}^*$ be the maximum reduced  vector
 such that $\vec{x}^* \le \vec{x}$.
 
 \begin{example}
 	
 	In the below figure,
 	$$\vec{x}=(5, 10, 7, 13, 12, 16, 21, 18, 24),$$ while
 	$$\vec{x}^{*}=(5, 7, 7, 12, 12, 16, 18, 18, 24).$$
 	
 	\begin{figure}[H]
 		\setlength{\unitlength}{4mm}
 		$$ \begin{picture}(26,10)(0,0)
 		\put(0,0){\vector(1,0){26}}
 		\put(26,-.05){$x$}
 		\put(0,0){\vector(0,1){10}}
 		\put(0,11){\makebox(0,0){$y$}}
 		\put(-1,-1){(0,0)}
 		\linethickness{.075mm}
 		\multiput(0,0)(1,0){25}
 		{\line(0,1){9}}
 		\multiput(0,0)(0,1){10}
 		{\line(1,0){24}}
 		\linethickness{.6mm}
 		\put(0,0){\line(1,0){5}}	
 		\linethickness{.6mm}
 		\put(5,0){\line(0,1){0.18}}
 		\linethickness{.6mm}
 		\put(5,0){\line(0,1){1}}
 		\linethickness{.6mm}
 		\put(5,1){\line(1,0){5}}
 		\linethickness{.6mm}
 		\put(10,1){\line(0,1){1}}
 		\linethickness{.6mm}
 		\put(10,2){\line(-1,0){3}}
 		\linethickness{.6mm}
 		\put(7,2){\line(0,1){1}}
 		\linethickness{.6mm}
 		\put(7,3){\line(1,0){6}}
 		\linethickness{.6mm}
 		\put(13,3){\line(0,1){1}}
 		\linethickness{.6mm}
 		\put(13,4){\line(-1,0){1}}
 		\linethickness{.6mm}
 		\put(12,4){\line(0,1){1}}
 		\linethickness{.6mm}
 		\put(12,5){\line(1,0){4}}
 		\linethickness{.6mm}
 		\put(16,5){\line(0,1){1}}
 		\linethickness{.6mm}
 		\put(16,6){\line(1,0){5}}
 		\linethickness{.6mm}
 		\put(21,6){\line(0,1){1}}
 		\linethickness{.6mm}
 		\put(21,7){\line(-1,0){3}}
 		\linethickness{.6mm}
 		\put(18,7){\line(0,1){1}}
 		\linethickness{.6mm}
 		\put(18,8){\line(1,0){6}}
 		\linethickness{.6mm}
 		\put(24,8){\line(0,1){1}}

 		\linethickness{.3mm}\color{red}
 		\put(0,0){\line(1,0){5}}	
 		\linethickness{.3mm}
 		\put(5,0){\line(0,1){0.18}}
 		\linethickness{.3mm}
 		\put(5,0){\line(0,1){1}}
 		\linethickness{.3mm}
 		\put(5,1){\line(1,0){2}}
 		\linethickness{.3mm}
 		\put(7,1){\line(0,1){2}}
 		\linethickness{.3mm}
 		\put(7,3){\line(1,0){5}}
 		\linethickness{.3mm}
 		\put(12,3){\line(0,1){2}}
 		\linethickness{.3mm}
 		\put(12,5){\line(1,0){4}}
 		\linethickness{.3mm}
 		\put(16,5){\line(0,1){1}}
 		\linethickness{.3mm}
 		\put(16,6){\line(1,0){2}}
 		\linethickness{.3mm}
 		\put(18,6){\line(0,1){2}}
 		\linethickness{.3mm}
 		\put(18,8){\line(1,0){6}}
 		\linethickness{.3mm}
 		\put(24,8){\line(0,1){1}}
 		\end{picture}
 		$$
 		\\
 		\caption{The vector $\vec{x}$  (black) and its reduced form $\vec{x^*}$  (red)}\label{fig2}
 	\end{figure}
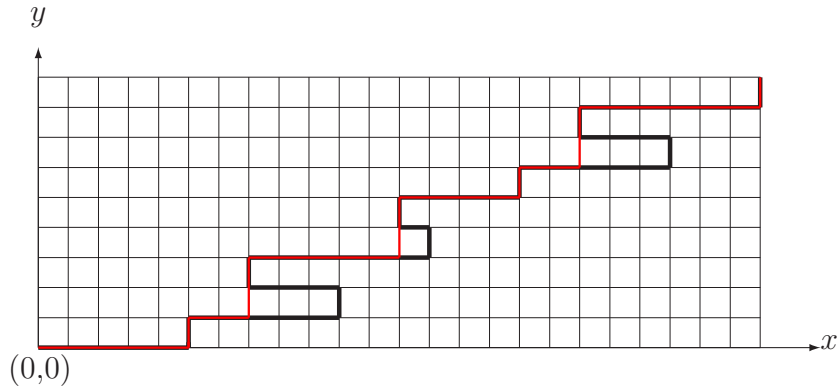
 	
 \end{example}
 
 \begin{corollary} \label{cor3}
 	For any vector $\vec{x}$,  {$\psi(\vec{x}) = \psi(\vec{x}^*)$}.
 \end{corollary}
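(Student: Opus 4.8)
The plan is to prove the stronger statement that $\mathcal{P}(\vec{x}) = \mathcal{P}(\vec{x}^*)$ as sets of lattice paths, from which $\psi(\vec{x}) = \psi(\vec{x}^*)$ is immediate. Note first that the two path sets share the same endpoint: writing $x_i^* = \min_{j \ge i} x_j$ (which is readily checked to be the maximum reduced vector bounded by $\vec{x}$), we have $x_n^* = x_n$, so both $\mathcal{P}(\vec{x})$ and $\mathcal{P}(\vec{x}^*)$ consist of paths ending at $(x_n, n)$. One inclusion is free: since $\vec{x}^* \le \vec{x}$, the monotonicity of $\mathcal{P}$ recorded in the proof of Lemma \ref{lem7} gives $\mathcal{P}(\vec{x}^*) \subseteq \mathcal{P}(\vec{x})$.

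For the reverse inclusion I would iterate Observation \ref{obs}. If $\vec{x}$ is not already reduced, pick the smallest index $i$ with $x_i > x_{i+1}$ and replace $x_i$ by $x_{i+1}$; Observation \ref{obs} guarantees that this operation leaves the path set $\mathcal{P}$ unchanged. Each such step strictly decreases $\sum_k x_k$ while keeping every entry bounded below (no entry ever drops beneath the corresponding suffix minimum), so after finitely many steps we reach a reduced vector $\vec{y}$ with $\mathcal{P}(\vec{x}) = \mathcal{P}(\vec{y})$ by transitivity.

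It then remains to identify $\vec{y}$ with $\vec{x}^*$. The key invariant is that a single reduction step preserves all suffix minima $\min_{j \ge k} x_j$: for $k > i$ the relevant entries are untouched, and for $k \le i$ the value $x_i$ was not the suffix minimum to begin with (as $x_{i+1} < x_i$), so deleting it changes nothing. Hence $\vec{y}$ has the same suffix minima as $\vec{x}$, and being reduced it satisfies $y_k = \min_{j \ge k} y_j = \min_{j \ge k} x_j = x_k^*$; that is, $\vec{y} = \vec{x}^*$. Combining, $\mathcal{P}(\vec{x}) = \mathcal{P}(\vec{x}^*)$, so $\psi(\vec{x}) = \psi(\vec{x}^*)$.

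The only delicate point is the bookkeeping in this last step: one must be sure the process halts (handled by the strictly decreasing, bounded-below potential $\sum_k x_k$) and that it halts precisely at $\vec{x}^*$ rather than at some other reduced vector below $\vec{x}$ (handled by the suffix-minima invariant). A shorter alternative avoids the iteration entirely: given $P \in \mathcal{P}(\vec{x})$ and a vertex $(a,j) \in P$ with $j \le n-1$, monotonicity of $P$ forces it to meet every height $\ell-1$ with $j+1 \le \ell \le n$ at some $x$-coordinate $b \ge a$, and the defining constraint at that vertex gives $b \le x_\ell$; thus $a \le \min_{\ell \ge j+1} x_\ell = x_{j+1}^*$, so $P \in \mathcal{P}(\vec{x}^*)$. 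Either route yields the corollary.
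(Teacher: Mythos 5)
Your proof is correct and follows essentially the paper's route: the paper states Corollary \ref{cor3} without further argument, intending exactly your iteration of Observation \ref{obs}, and you have merely supplied the bookkeeping it leaves implicit (termination via the strictly decreasing nonnegative-integer potential $\sum_k x_k$, and the suffix-minima invariant identifying the terminal reduced vector with $\vec{x}^*$, whose entries are $x_i^*=\min_{j\ge i} x_j$). Your one-shot alternative is also sound, and is in effect a globalized version of the reachability argument the paper uses to prove Observation \ref{obs} itself (from a vertex $(a,j)$ the path must cross every higher level at some abscissa $b\ge a$ with $b\le x_\ell$), so both routes rest on the paper's underlying idea.
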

 
 Thus to obtain a formula for $\psi(\vec{x})$, we can restrict to the case that 
 $\vec{x}$ is of reduced form.

 The following theorem gives an explicit formula for $\psi(\vec{x})$.
 
 \begin{theorem}{\rm \cite{Sri1976}}
 	\label{thmm}	Assume $\vec{x}=(x_1, x_2, \ldots, x_n)$ is of reduced form. For $1 \le i, j \le n$, let
 	\begin{equation*}
 	a_{ij}=\binom{x_i+1}{j-i+1}_{+},
 	\end{equation*}	
 	where 	
 	\begin{equation*}
 	\dbinom{y}{z}_{+}=\left\{\begin{array}{cl}
 	\binom{y}{z}, & \ when \ y \ge z, \\
 	0,   & \ when \ z < 0 \ or\  y<z, \\
 	1,   & \ when \ z=0\ and \ y \ge 0.
 	\end{array}
 	\right.
 	\end{equation*}
 	Then
 	\begin{equation*}
 	\psi(\vec{x}(f))=\mathop{det}\limits_{n\times n}(a_{ij}).
 	\end{equation*}
 \end{theorem}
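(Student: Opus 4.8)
The plan is to prove the determinant formula by the Lindstr\"{o}m--Gessel--Viennot (LGV) lemma, after first reducing $\psi(\vec{x})$ to an enumeration of bounded monotone sequences. First I would record a bijection between $\vec{x}$-dominated paths ending at $(x_n,n)$ and integer sequences: reading off the $x$-coordinate $c_j$ at which the $j$-th up-step is taken, such a path is completely determined by $(c_1,\dots,c_n)$, the monotonicity of a lattice path gives $c_1\le c_2\le\cdots\le c_n$, and the domination constraint at the vertex $(c_j,j-1)$ reads $c_j\le x_{j}$. Hence
$$\psi(\vec{x})=\#\{(c_1,\dots,c_n)\in\mathbb{Z}^n:\ 0\le c_1\le c_2\le\cdots\le c_n,\ c_j\le x_j\}.$$
(Here $\vec{x}$ is reduced by hypothesis; for a general vector one first replaces $\vec{x}$ by $\vec{x}^*$ via Corollary \ref{cor3}. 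The reduced-form hypothesis will be used again, crucially, in the non-crossing argument below.)

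Next I would realize $(a_{ij})$ as a path-count matrix. Working with up--right lattice paths (unit right- and up-steps), place the sources and sinks at
$$A_i=(-x_i-i,\ i-1),\qquad B_j=(-j,\ j)\qquad(1\le i,j\le n).$$
The up--right paths from $A_i$ to $B_j$ have horizontal displacement $x_i+i-j$ and vertical displacement $j-i+1$, so their number equals $\binom{(x_i+i-j)+(j-i+1)}{j-i+1}=\binom{x_i+1}{j-i+1}$ when both displacements are nonnegative, and is $0$ otherwise; this matches $a_{ij}=\binom{x_i+1}{j-i+1}_{+}$ exactly, including the degenerate cases ($a_{i,i-1}=\binom{x_i+1}{0}=1$ is the single horizontal path, and $a_{ij}=0$ for $j<i-1$ because then $B_j$ lies strictly below $A_i$). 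By the LGV lemma, $\det(a_{ij})$ is the signed count, over permutations $\sigma$, of families of pairwise vertex-disjoint paths $\pi_i\colon A_i\to B_{\sigma(i)}$.

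The final step is to show this signed count collapses to $\psi(\vec{x})$. Because $\vec{x}$ is reduced, as $i$ grows the sources $A_i$ move strictly up and to the left (this is precisely the condition $x_{i+1}\ge x_i$), and the sinks $B_j$ move up and to the left along the anti-diagonal $x+y=0$; this compatible ordering is exactly the LGV non-permutability condition, so a vertex-disjoint family can occur only for $\sigma=\mathrm{id}$, and then $\det(a_{ij})$ counts the vertex-disjoint families with $\pi_i\colon A_i\to B_i$. Each such $\pi_i$ has vertical displacement $1$, hence is a single up-step taken at some $x$-coordinate $e_i\in[-x_i-i,\,-i]$; a short check of the shared row $y=i$ shows the $n$ paths are pairwise disjoint if and only if $e_1>e_2>\cdots>e_n$. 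Substituting $c_i=-e_i-i$ turns this into $0\le c_1\le c_2\le\cdots\le c_n$ with $c_i\le x_i$, i.e. exactly the sequences of the first paragraph, so $\det(a_{ij})=\psi(\vec{x})$.

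I expect the main obstacle to be bookkeeping rather than conceptual: one must pin down the source/sink coordinates so that the path-count matrix reproduces $a_{ij}$ with all the boundary conventions of $\binom{\cdot}{\cdot}_{+}$, and one must argue the non-permutability cleanly (this is where the reduced-form hypothesis is genuinely used, both in the LGV ordering and implicitly in the sequence bijection). An alternative, more self-contained route avoids LGV entirely: the recurrence $\psi(\vec{x})=\psi(\vec{x}\rightarrow i)+\psi(\vec{x}\uparrow i)$ of Lemma \ref{lem5} (taken with $i=1$) can be matched to a cofactor expansion of $\det(a_{ij})$ along its first column, whose only nonzero entries are $a_{11}=x_1+1$ and $a_{21}=1$, and one then inducts on $n+\sum_i x_i$. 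I would present the LGV argument as the main proof, since it simultaneously explains why a determinant appears at all.
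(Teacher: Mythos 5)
Your proposal is correct, but there is nothing in the paper to compare it against: the paper does not prove this statement at all. Theorem \ref{thmm} is quoted from Mohanty's book \cite{Sri1976}, where it is a special case of a general determinant formula for counting lattice paths (obtained there by classical recursion/reflection-style arguments), and the paper explicitly says so right after the theorem. So your Lindstr\"{o}m--Gessel--Viennot derivation supplies a self-contained proof where the paper offers only a citation, and it buys something the citation does not: a conceptual explanation of why a determinant appears. I checked the details and they hold up. The encoding $\psi(\vec{x})=\#\{0\le c_1\le\cdots\le c_n,\ c_j\le x_j\ \text{for all } j\}$ is exactly right: the binding domination constraint at height $j-1$ is at the foot $(c_j,j-1)$ of the $j$-th up-step, and the (formally undefined) constraint at height $n$ is vacuous. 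With $A_i=(-x_i-i,\,i-1)$ and $B_j=(-j,\,j)$ the path counts reproduce $a_{ij}=\binom{x_i+1}{j-i+1}_{+}$ including both degenerate regimes ($j=i-1$ gives the unique horizontal path, $j<i-1$ or $j>x_i+i$ gives zero). Reducedness gives $-x_{i'}-i'<-x_i-i$ for $i<i'$, which is precisely what makes any family realizing an inversion intersect: at the lowest common row the path from $A_{i'}$ starts strictly left of the other path, at the highest common row it ends strictly right of it (its sink $-j_2$ exceeds the other path's ceiling $-j_1$), and two monotone paths cannot exchange sides without sharing a lattice point. Finally, identity families are single up-steps at positions $e_i\in[-x_i-i,-i]$, adjacent paths share only row $i$, disjointness there is $e_i>e_{i+1}$, and $c_i=-e_i-i$ lands exactly on the sequences above.

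One caution about your closing aside. The first-column cofactor expansion gives $\det A=(x_1+1)\det M_{11}-\det M_{21}$, and while $M_{11}$ is indeed the matrix attached to $\vec{x}\uparrow 1$, the matrix attached to $\vec{x}\rightarrow 1$ has entries $\binom{x_i}{j-i+1}$ and is \emph{not} a minor of $A$; identifying the expansion with the recurrence of Lemma \ref{lem5} therefore requires further Pascal-rule row or column manipulations, and one must also handle base cases with some $x_i=0$ separately, since Lemma \ref{lem5} assumes $x_i\ge 1$. As stated, that alternative is a plausible sketch rather than a proof; your main LGV argument, however, is complete and needs no repair.
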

 
 Indeed, a more general formula for the number of families of lattice paths is given in \cite{Sri1976}. The formula stated in Theorem \ref{thmm} is a special case of the more general formula.

 \begin{example}
 	For $K_4$,  if $f(v_1)=3$, $f(v_2)=f(v_3)=6$, $f(v_4)=9$, then we can 
 	conclude that
 	$\vec{x}=(2, 4, 3, 5)$, and $\vec{x}^{*}=(2, 3, 3, 5)$, thus we have
 	\begin{equation*}
 	\begin{split}
 	\psi(\vec{x})=\psi(\vec{x}^{*})&=\left|\begin{array}{cccc}
 	\dbinom{3}{1}_{+} & \dbinom{3}{2}_{+} & \dbinom{3}{3}_{+} & 	
 		\dbinom{3}{4}_{+}   \\[3\jot]
 	\dbinom{4}{0}_{+} & \dbinom{4}{1}_{+} & \dbinom{4}{2}_{+} & 		
 		\dbinom{4}{3}_{+}   \\[3\jot]
 	\dbinom{4}{-1}_{+} & \dbinom{4}{0}_{+} & \dbinom{4}{1}_{+} & 
 		\dbinom{4}{2}_{+}   \\[3\jot]
 	\dbinom{6}{-2}_{+} & \dbinom{6}{-1}_{+} & \dbinom{6}{0}_{+} & 
 		\dbinom{6}{1}_{+}   \\
 	\end{array}
 	\right|
 	=\left|\begin{array}{cccc}
 	3~& 3~& 1~& 0 \\
 	1~& 4~& 6~& 4 \\
 	0~& 1~& 4~& 6 \\
 	0~& 0~& 1~& 6 \\
 	\end{array}
 	\right|
 	=72.
 	\end{split}
 	\end{equation*}
 	
 \end{example}

 Alternatively, the number of $\vec{x}$-dominated lattice
 paths   can also be calculated recursively, as depicted in   the figure below, where the number
 in each lattice point $(a,b)$ is the number of $\vec{x}$-dominated paths ending at $(a,b)$. It follows from the definition that the number at $(a,b)$ is the summation of the numbers at $(a-1,b)$ and $(a,b-1)$, with obvious boundary values.
 
 \begin{figure}[H]
 	\setlength{\unitlength}{9mm}
 	$$ \begin{picture}(7,6)(0,0)
 	\put(0,0){\vector(1,0){8}}
 	\put(8,-.05){$x$}
 	\put(0,0){\vector(0,1){6}}
 	\put(0.2,6){\makebox(0,0){$y$}}
 	\put(-0.5,-0.5){(0,0)}
 	\linethickness{.075mm}
 	\multiput(0,0)(1,0){8}
 	{\line(0,1){5}}
 	\multiput(0,0)(0,1){6}
 	{\line(1,0){7}}
 	\linethickness{.6mm}
 	\put(0,0){\line(1,0){2.03}}	
 	
 	\linethickness{.6mm}
 	\put(2,0){\line(0,1){1.05}}
 	\linethickness{.6mm}
 	\put(2,1){\line(1,0){1.03}}
 	\linethickness{.6mm}
 	\put(3,1){\line(0,1){2.05}}
 	\linethickness{.6mm}
 	\put(3,3){\line(1,0){2.03}}
 	\linethickness{.6mm}
 	\put(5,3){\line(0,1){1.05}}
 	\put(-0.3,0){1}
 	\put(-0.3,1){1}
 	\put(-0.3,2){1}
 	\put(-0.3,3){1}
 	\put(-0.3,4){1}
 	\put(0.7,0){1}
 	\put(0.7,1){2}
 	\put(0.7,2){3}
 	\put(0.7,3){4}
 	\put(0.7,4){5}
 	\put(1.7,0){1}
 	\put(1.7,1){3}
 	\put(1.7,2){6}
 	\put(1.7,3){10}
 	\put(1.7,4){15}
 	\put(2.7,1){3}
 	\put(2.7,2){9}
 	\put(2.5,3){19}
 	\put(2.5,4){34}
 	\put(3.5,3){19}
 	\put(3.5,4){53}
 	\put(4.5,3){19}
 	\put(4.5,4){72}
 	\put(5,4){\circle*{0.2}}	
 	\put(5.2,4.1){$(5, 4)$}
 	\end{picture}
 	$$
 	\\
 	\caption{$\psi((2, 3, 3, 5 ))$}\label{fig3}
 \end{figure}
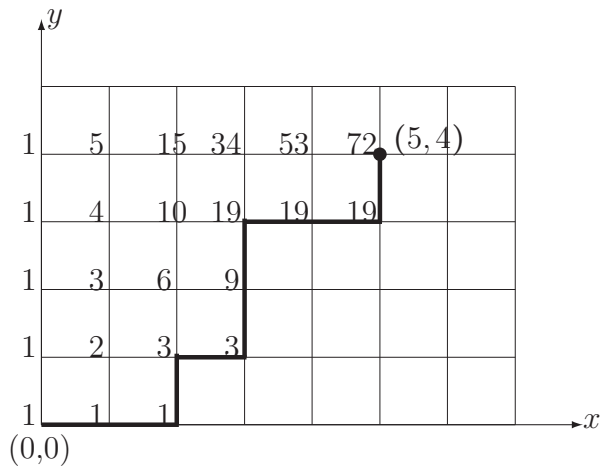

\section{Proof of Theorem \ref{th0}}

\begin{definition}
	Assume $G$ is a graph and $f:V(G) \to N$ is a mapping and $L$ is an $f$-list assignment of $G$.
	An extension of $L$ to $G \diamondplus \overline{K_m}$ is a list assignment  $L'$ of   $G \diamondplus \overline{K_m}$
	such that for each vertex $v$ of $G$, $L'(v)=L(v)$ and
	for each vertex $v$ of $\overline{K_m}$, $|L(v)|=|V(G)|$.
	We say $L$ is {\em $m$-extendable} if for any
	extension $L'$ of $L$ to  $G \diamondplus \overline{K_m}$, there exists an $L'$-colouring of $G \diamondplus \overline{K_m}$.
\end{definition}

The following is an equivalent definition of $m_c(G,f)$:
 $$m_c(G,f) = \min \{m: \text{ there is an $f$-list  assignment $L$ of $G$ which is not $m$-extendable}\}.  $$

\begin{lemma}
\label{lem0}
 Assume $G$ is a graph and $f:V(G) \to N$ is a mapping and $L$ is an $f$-list assignment of $G$.
 \begin{itemize}
 	\item[(1)] If there is an $L$-colouring $\phi$ of
 	$G$ such that $|\phi(V(G))| < |V(G)|$, then $L$ is $m$-extendable for any $m$.
 	\item[(2)] Otherwise, for any $L$-colouring $\phi$ of $G$,
 	$|\phi(V(G))| = |V(G)|$. Then $L$ is $m$-extendable if and only if
 	$$m < |\{\phi(V(G)): \phi~ \text{is an $L$-colouring of $G$}\}|.$$
 \end{itemize}
\end{lemma}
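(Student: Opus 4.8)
The plan is to reduce the $L'$-colourability of $G \diamondplus \overline{K_m}$ to a counting statement about the colour sets that $L$-colourings of $G$ realize. The observation driving everything is that in $G \diamondplus \overline{K_m}$ each vertex $u$ of $\overline{K_m}$ is adjacent to every vertex of $G$ but to no other vertex of $\overline{K_m}$. Hence, once $G$ has been coloured by a fixed $L$-colouring $\phi$, the vertices of $\overline{K_m}$ may be coloured independently of one another, and such a $u$ admits a legal colour from its list $L'(u)$ exactly when $L'(u) \setminus \phi(V(G)) \neq \emptyset$. So, for any fixed extension $L'$, the colouring $\phi$ extends to an $L'$-colouring of $G \diamondplus \overline{K_m}$ if and only if $L'(u) \setminus \phi(V(G)) \neq \emptyset$ for every vertex $u$ of $\overline{K_m}$.

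For part (1) I would fix an $L$-colouring $\phi$ of $G$ with $|\phi(V(G))| < |V(G)|$. Then for any extension $L'$ and any vertex $u$ of $\overline{K_m}$ we have $|L'(u)| = |V(G)| > |\phi(V(G))|$, so $L'(u) \setminus \phi(V(G))$ is automatically nonempty. By the observation above, $\phi$ extends to an $L'$-colouring of $G \diamondplus \overline{K_m}$ for every extension $L'$ and every $m$, which is exactly the statement that $L$ is $m$-extendable for all $m$.

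For part (2) every $L$-colouring $\phi$ satisfies $|\phi(V(G))| = |V(G)|$, so each realized colour set $\phi(V(G))$ and each added list $L'(u)$ has exactly $|V(G)|$ elements; for two sets of the same finite size, $L'(u) \setminus \phi(V(G)) \neq \emptyset$ is equivalent to $L'(u) \neq \phi(V(G))$. Writing $\mathcal{C} = \{\phi(V(G)) : \phi \text{ is an } L\text{-colouring of } G\}$ and $N = |\mathcal{C}|$, I must show $L$ is $m$-extendable if and only if $m < N$. To prove sufficiency, I take an arbitrary extension $L'$ and note that the lists $L'(u)$ over the $m$ vertices $u$ of $\overline{K_m}$ can coincide with at most $m < N$ members of $\mathcal{C}$; hence some $S \in \mathcal{C}$ satisfies $S \neq L'(u)$ for all $u$, and colouring $G$ by an $L$-colouring $\phi$ with $\phi(V(G)) = S$ extends to an $L'$-colouring by the observation. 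For necessity I argue contrapositively: assuming $m \geq N$, I enumerate $\mathcal{C} = \{S_1, \ldots, S_N\}$ and define $L'$ on $\overline{K_m}$ so that $\{L'(u) : u \in \overline{K_m}\} \supseteq \mathcal{C}$ (which is possible since $m \geq N$ and each $S_j$ is a legitimate list of size $|V(G)|$); then any $L'$-colouring restricts on $G$ to an $L$-colouring $\phi$ with $\phi(V(G)) = S_j$ for some $j$, and the vertex carrying the list $S_j$ can then receive no colour, so no $L'$-colouring exists and $L$ is not $m$-extendable.

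The argument is short and its only subtlety is bookkeeping rather than depth: the crucial point is that the vertices of $\overline{K_m}$ impose mutually independent constraints and are all joined to $G$, so the entire extension problem factors through the single quantity $\phi(V(G))$. One should also check the degenerate case in which $G$ has no $L$-colouring at all: this falls under part (2) with $\mathcal{C} = \emptyset$ and $N = 0$, and there $m < 0$ fails for every $m \geq 0$, matching the fact that $G \diamondplus \overline{K_m}$ is then never $L'$-colourable (indeed $G$ itself is not).
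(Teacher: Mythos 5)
Your proof is correct and takes essentially the same route as the paper's: both reduce the whole extension problem to the single quantity $\phi(V(G))$, prove (1) by the cardinality count $|L'(u)|=|V(G)|>|\phi(V(G))|$, and prove (2) by a pigeonhole argument when $m<|\Phi(G,L)|$ together with a blocking extension whose lists on $\overline{K_m}$ enumerate the realized colour sets. Your two small additions---handling all $m\ge|\Phi(G,L)|$ by covering rather than a bijection, and checking the degenerate case $\Phi(G,L)=\emptyset$---are minor tightenings of the same argument, not a different approach.
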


\begin{proof}
(1) Assume there is an $L$-colouring $\phi$ of
$G$ such that $|\phi(V(G))| < |V(G)|$. For an arbitrary integer $m$ and   an extension $L'$ of $L$ to   $G \diamondplus \overline{K_m}$
with  $|L(v)|=|V(G)|$
 for each vertex $v$ of $\overline{K_m}$, we can extend $\phi$ to an $L'$-colouring of $G \diamondplus \overline{K_m}$ by assigning a colour $\phi(v) \in L'(v)-\phi(V(G))$ for each vertex $v$ of $\overline{K_m}$.

(2) We first prove that if $m=|\{\phi(V(G)): \phi~ \text{is an $L$-colouring of $G$}\}|$, then $L$ is not $m$-extendable.

Let $\pi$ be a one-to-one correspondence between $V(\overline{K_m})$
and  $\{\phi(V(G)): \phi~ \text{is an $L$-colouring of $G$}\}$. Let $L'$ be the extension of $L$ to  $G \diamondplus \overline{K_m}$ such that for
each vertex $v$ of $\overline{K_m}$, $L'(v)=\pi(v)$. Then any $L$-colouring $\phi$ of $G$ cannot be extended to an $L'$-colouring of $G \diamondplus \overline{K_m}$.

Next we prove that if $m < |\{\phi(V(G)): \phi~ \text{is an $L$-colouring of $G$}\}|$, then $L$ is $m$-extendable. Since  $m < |\{\phi(V(G))\}|$, there   exists 
an $L$-colouring $\phi$ of $G$ such that $\phi(V(G)) \ne L'(v)$ for any vertex 
$v$ of $\overline{K_m}$. Therefore $\phi$ can be extended to an $L'$-colouring 
of $G \diamondplus \overline{K_m}$, by  assigning a colour $c \in 
L'(v)-\phi(V(G))$ for each vertex $v$ of $\overline{K_m}$.
\end{proof}

Assume $L$ is an $f$-list assignment of $G$.

 If $G$ has no $L$-colouring $\phi$ in which two vertices are coloured by the same colour, then let $$\Phi(G,L)=\{\phi(V(G)): \phi~ \text{is an $L$-colouring of $G$}\}$$ and let

$$\kappa(G,L)=\begin{cases}
\infty,&\text{if $G$ has an $L$-colouring $\phi$ with $|\phi(V(G))|<|V(G)|$},\\
 |\Phi(G,L)|,&\text{otherwise}.
\end{cases}$$
Note that if $G$ is not $L$-colourable, then 	$\Phi(G,L) = \emptyset$ and $\kappa(G,L)=0$.

\begin{corollary}
\label{cor2}
Assume $G$ is a graph and $f:V(G) \to N$ is a mapping. Then
$$ m_c(G,f) = \min\{\kappa(G,L): L~\text{is an $f$-list assignment of $G$}\}.$$
\end{corollary}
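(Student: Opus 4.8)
The plan is to read off the result directly from the equivalent characterisation of $m_c(G,f)$ stated just above Lemma \ref{lem0}, namely
$$m_c(G,f) = \min \{m: \text{ there is an $f$-list assignment $L$ of $G$ which is not $m$-extendable}\},$$
by computing, for each fixed $f$-list assignment $L$, the least $m$ for which $L$ fails to be $m$-extendable. Write $t(L)$ for this least value (with $t(L)=\infty$ if $L$ is $m$-extendable for every $m$). For a fixed $L$ the property of being \emph{not} $m$-extendable is monotone in $m$, so the set of $m$ for which \emph{some} $L$ is not $m$-extendable is exactly $\{m : m \ge \min_L t(L)\}$, whose minimum is $\min_L t(L)$. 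Thus it suffices to prove $t(L)=\kappa(G,L)$ for every $L$, and then take the minimum over all $L$.

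To establish $t(L)=\kappa(G,L)$ I would split into the two cases of Lemma \ref{lem0}. If $G$ has an $L$-colouring $\phi$ with $|\phi(V(G))|<|V(G)|$, then part (1) of Lemma \ref{lem0} says $L$ is $m$-extendable for every $m$, so $t(L)=\infty$, which matches the definition $\kappa(G,L)=\infty$. Otherwise every $L$-colouring $\phi$ satisfies $|\phi(V(G))|=|V(G)|$; part (2) of Lemma \ref{lem0} then gives that $L$ is $m$-extendable if and only if $m<|\Phi(G,L)|$, so $L$ is not $m$-extendable precisely when $m\ge |\Phi(G,L)|$, whence $t(L)=|\Phi(G,L)|=\kappa(G,L)$. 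Combining the two cases, $t(L)=\kappa(G,L)$ for every $f$-list assignment $L$, and therefore
$$m_c(G,f)=\min_{L} t(L)=\min\{\kappa(G,L): L~\text{is an $f$-list assignment of $G$}\}.$$

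There is essentially no obstacle here: the mathematical content is entirely supplied by Lemma \ref{lem0}, and the corollary merely repackages its two cases into the single quantity $\kappa(G,L)$. The only point that warrants a moment of care is the degenerate situation in which $G$ is not $L$-colourable; this is subsumed under the second case (every $L$-colouring vacuously uses $|V(G)|$ colours), where $\Phi(G,L)=\emptyset$ and $\kappa(G,L)=0$, and indeed $L$ is already not $0$-extendable since $G\diamondplus\overline{K_0}=G$ has no $L$-colouring, so $t(L)=0=\kappa(G,L)$ as required.
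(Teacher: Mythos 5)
Your proof is correct and is essentially the paper's own argument: Corollary \ref{cor2} is stated there without a separate proof, as an immediate consequence of the displayed equivalent definition of $m_c(G,f)$ (minimum $m$ admitting a non-$m$-extendable $f$-list assignment) combined with the two cases of Lemma \ref{lem0}, which is exactly your computation $t(L)=\kappa(G,L)$ followed by taking the minimum over $L$. Your explicit check of the degenerate case where $G$ is not $L$-colourable, giving $\Phi(G,L)=\emptyset$ and $t(L)=0=\kappa(G,L)$, is a detail the paper leaves implicit but handles consistently via its remark that $\kappa(G,L)=0$ in that situation.
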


Now we are ready to prove the first equality in Theorem \ref{th0}.

\begin{lemma}
\label{lem11}
For any graphs $G_i$   and mappings $f_i:V(G_i) \to N$ ($i=1,2$),
\begin{equation*}
m_c(G_1\cup G_2 ,f_1\cup f_2)=m_c(G_1, f_1)m_c(G_2, f_2).
\end{equation*}
\end{lemma}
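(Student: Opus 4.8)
The plan is to argue entirely through the combinatorial quantity $\kappa$ and invoke Corollary~\ref{cor2}, which gives $m_c(G,f)=\min_L \kappa(G,L)$ as $L$ ranges over $f$-list assignments. Writing $G=G_1\cup G_2$, $f=f_1\cup f_2$, and for any $f$-list assignment $L$ of $G$ setting $L_1=L|_{V(G_1)}$, $L_2=L|_{V(G_2)}$, I would first record the elementary fact that, since there are no edges between $G_1$ and $G_2$, every $L$-colouring $\phi$ of $G$ is precisely a pair $(\phi_1,\phi_2)$ of an $L_1$-colouring of $G_1$ and an $L_2$-colouring of $G_2$, with $\phi(V(G))=\phi_1(V(G_1))\cup\phi_2(V(G_2))$.

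The central step is a factorization of $\kappa$. I claim that whenever $\kappa(G,L)$ is finite and $G$ is $L$-colourable, one has $\kappa(G,L)=\kappa(G_1,L_1)\,\kappa(G_2,L_2)$. Finiteness of $\kappa(G,L)$ means no $L$-colouring of $G$ repeats a colour; together with colourability of each factor this forces that no $L_i$-colouring of $G_i$ repeats a colour (so each $\kappa(G_i,L_i)=|\Phi(G_i,L_i)|$ is finite) and, crucially, that $\phi_1(V(G_1))\cap\phi_2(V(G_2))=\emptyset$ for \emph{every} pair. Putting $A=\bigcup_{S\in\Phi(G_1,L_1)}S$ and $B=\bigcup_{S\in\Phi(G_2,L_2)}S$, this universal disjointness yields $A\cap B=\emptyset$, so the union map $(S_1,S_2)\mapsto S_1\cup S_2$ from $\Phi(G_1,L_1)\times\Phi(G_2,L_2)$ onto $\Phi(G,L)$ is a bijection (one recovers $S_1=(S_1\cup S_2)\cap A$), whence $|\Phi(G,L)|=|\Phi(G_1,L_1)|\cdot|\Phi(G_2,L_2)|$.

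From the factorization the two inequalities follow. For the lower bound I would show $\kappa(G,L)\ge m_c(G_1,f_1)\,m_c(G_2,f_2)$ for every $L$ by a short case split: if $\kappa(G,L)=\infty$ the bound is trivial; if $G$ is not $L$-colourable then some $G_i$ is not $L_i$-colourable, so $m_c(G_i,f_i)=0$ and the product vanishes; otherwise the factorization combined with $\kappa(G_i,L_i)\ge\min_{L_i}\kappa(G_i,L_i)=m_c(G_i,f_i)$ gives the bound. Taking the minimum over $L$ yields $m_c(G,f)\ge m_c(G_1,f_1)m_c(G_2,f_2)$. For the upper bound I would exhibit one extremal $L$: choose $L_1,L_2$ attaining the minima $m_c(G_i,f_i)=\kappa(G_i,L_i)$ (finite by Corollary~\ref{cor1}), relabel the colours of $L_2$ so that the colour universes of $L_1$ and $L_2$ are disjoint, and set $L=L_1\cup L_2$. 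Disjoint universes rule out any colour shared between $G_1$ and $G_2$, so every $L$-colouring is rainbow, the factorization applies, and $\kappa(G,L)=m_c(G_1,f_1)m_c(G_2,f_2)$, giving $m_c(G,f)\le m_c(G_1,f_1)m_c(G_2,f_2)$. Together these prove the equality.

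The main obstacle is exactly the cross-colour phenomenon hidden in the definition of $\kappa$: a colour common to $G_1$ and $G_2$ can merge two separately proper rainbow colourings into one with $|\phi(V(G))|<|V(G)|$. The proof must handle this on both fronts, showing that such collisions can only inflate $\kappa$ to $\infty$ (harmless for the lower bound) while being removable by a relabelling of colours (essential for realizing the product exactly in the upper bound).
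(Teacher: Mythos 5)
Your proposal is correct and follows essentially the same route as the paper's proof: both work through Corollary~\ref{cor2}, proving the upper bound by combining extremal lists $L_1,L_2$ with disjoint colour universes so that $\Phi(G,L)$ factors as $\Phi(G_1,L_1)\times\Phi(G_2,L_2)$, and the lower bound by taking an extremal $L$ for $G$, disposing of the cases $m_c(G_i,f_i)=0$ (and the degenerate non-rainbow case) separately, and then applying the same factorization to the restrictions $L_i=L|_{V(G_i)}$. Your only departure is cosmetic: you justify the injectivity of the union map $(S_1,S_2)\mapsto S_1\cup S_2$ explicitly (via the disjoint colour pools $A$, $B$ and the recovery $S_1=(S_1\cup S_2)\cap A$), a step the paper asserts without comment.
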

\begin{proof}
Assume $L_i$ is an $f_i$-list assignment of $G_i$, $L_1(v) \cap L_2(u) = \emptyset$ for any $v \in V(G_1)$, $u \in V(G_2)$  and $G_i$ has no $L_i$-colouring $\phi_i$ in which two vertices are coloured by the same colour. Then $L=L_1 \cup L_2$ is an $(f_1 \cup f_2)$-list assignment of $G_1 \cup G_2$ and for any $L$-colouring $\phi$ of $G$,
 	$|\phi(V(G))| = |V(G)|$. Furthermore,
$$ \Phi(G,L)=\{\phi_1(V(G_1)) \cup \phi_2(V(G_2)): \phi_i \in  \Phi(G_i,L_i)  \}.$$
Hence by Corollary \ref{cor2},
$$m_c(G,f) \le | \Phi(G,L) | =|\Phi(G_1,L_1)| \times |\Phi(G_2,L_2)|=m_c(G_1,f_1)m_c(G_2,f_2).$$

If $m_c(G_i, f_i)=0$ for some $i=1,2$, then $m_c(G,f)=0$.
Assume $m_c(G_i,f_i) \ne 0$ for $i=1,2$.
By Corollary \ref{cor2}, there is an   $f$-list assignment $L$ of $G$
 such that $m_c(G,f)=\kappa(G,L)$.
 As $m_c(G_i,f_i) \ne 0$, we know that   $G_i$ is $L$-colourable, which implies that $G$ is $L$-colourable. Hence $m_c(G,f) >0$. If $m_c(G_i,f_i) = \infty$ for some $i$, then by Lemma  \ref{lem0} (2), there is an $L$-colouring $\phi_i$ of $G_i$ with $|\phi(V(G_i))| < |V(G_i)|$. This implies that there is an $L$-colouring $\phi$ of $G$ with
$|\phi(V(G))| < |V(G)|$, and hence $m_c(G,f)= \infty$.
Otherwise,  $m_c(G,f) \le m_c(G_1,f_1)m_c(G_2,f_2) < \infty$ implies that
$|\phi(V(G))| = |V(G)|$
for any  $L$-colouring $\phi$ of $G$.
For $i=1,2$, let
$L_i$ be the restriction of $L$ to $G_i$. For any $L$-colouring $\phi$ of $G$, let $\phi_i$ be the restriction of $\phi$ to $G_i$, then $\phi_i$ is an $L_i$-colouring of $G_i$. Conversely, if for $i=1,2$,  $\phi_i$ is an $L_i$-colouring of $G_i$, then $\phi_1 \cup \phi_2$ is an $L$-colouring of $G$. Since $|\phi(V(G))|=|V(G)|$ for any $L$-colouring of $G$, we conclude that for $i=1,2$,
$|\phi_i(V(G_i))|=|V(G_i)|$ for any $L_i$-colouring of $G_i$.
Therefore by Lemma  \ref{lem0} (2),
$$m_c(G,f) =|\Phi(G,L) | =|\Phi(G_1,L_1)| \times |\Phi(G_2,L_2)| \ge m_c(G_1,f_1)m_c(G_2,f_2).$$
This completes the proof.
 \end{proof}

\begin{corollary}
\label{cor4}
	Assume $G_1, G_2, \ldots, G_p$ are vertex disjoint graphs and $f_i: V(G_i) \to N$ are mappings.  Let
	 $G=G_1\cup G_2 \cup \ldots \cup G_p$    and $f= f_1 \cup f_2 \cup \ldots \cup f_p$. Then $$m_c(G,f)=\prod_{i=1}^pm_c(G_i,f_i).$$
\end{corollary}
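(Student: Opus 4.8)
The plan is to deduce Corollary \ref{cor4} from Lemma \ref{lem11} by a routine induction on the number $p$ of summands. The only structural observation needed is that both the vertex-disjoint union of graphs and the union of mappings are associative, so that the $p$-fold union can be regrouped as a union of two pieces at each step: one carrying the first $p-1$ graphs and one carrying the last.

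Concretely, I would take as base case $p = 1$, where the claimed product reduces to the single factor $m_c(G_1, f_1)$ and there is nothing to prove; the case $p = 2$ is precisely the content of Lemma \ref{lem11}. For the inductive step, assuming the identity for $p - 1$ graphs, I set $G' = G_1 \cup G_2 \cup \cdots \cup G_{p-1}$ and $f' = f_1 \cup f_2 \cup \cdots \cup f_{p-1}$, so that $G = G' \cup G_p$ and $f = f' \cup f_p$ with $G'$ and $G_p$ vertex-disjoint. Lemma \ref{lem11} then applies verbatim and gives
$$m_c(G, f) = m_c(G', f')\, m_c(G_p, f_p).$$
Invoking the induction hypothesis to rewrite $m_c(G', f') = \prod_{i=1}^{p-1} m_c(G_i, f_i)$ and substituting yields $m_c(G, f) = \prod_{i=1}^{p} m_c(G_i, f_i)$, which completes the induction.

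The only point requiring any attention --- and it is genuinely trivial --- is the verification that the hypotheses of Lemma \ref{lem11} hold at each application: namely that $G'$ and $G_p$ are vertex-disjoint (immediate, since all the $G_i$ are pairwise disjoint) and that the mapping on $G' \cup G_p$ is indeed $f' \cup f_p$ (immediate from the vertex-wise definition of the union of mappings). No colouring-theoretic or combinatorial input beyond Lemma \ref{lem11} is required; the induction merely iterates the two-graph product formula. The same argument, applied to the paintability analogue of Lemma \ref{lem11}, would establish the corresponding identity for $m_p$ asserted in Theorem \ref{th0}.
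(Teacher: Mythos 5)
Your proof is correct and matches the paper's intent exactly: the paper states Corollary \ref{cor4} without proof precisely because it follows from Lemma \ref{lem11} by the routine induction on $p$ that you spell out. Nothing is missing, and your closing remark about the paintability analogue is likewise how the paper obtains Corollary \ref{cor5} from Lemma \ref{lem4}.
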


Before proving the second equality of Theorem \ref{th0}, we first study the parameter $m_p(G,f)$.

The following well-known lemma follows easily from the definition.

For  a subset $U$ of $V(G)$, we denote by $U+m'$ a subset $M$ of $V(G 
\diamondplus \overline{K_m})$  such that $m' =  | M \cap V(\overline{K_m})|$  
and   $U=M \cap V(G)$.

As observed before,  	$m_p(G,f)=0$  if and only if $G$ is not $f$-paintable. 

\begin{lemma}
	\label{lem3}
Assume  $G$ is   $f$-paintable. 
\begin{itemize}
	\item[(1)] We say $(G,f,m)$ satisfies (1) if there is  a  subset $U$ of $V(G  )$ 
	such that  for any vertex $v \in U$,
	$m-  m_p(G, (f-\delta_U )) \ge m_p(G-v, (f-\delta_U ))$, and 
	for any independent set $I$ of $G$ contained in $U$ with $|I| \ge 2$, then  $G-I$ is not
	$(f-\delta_U)$-paintable.
	\item[(2)]  We say $(G,f,m)$ satisfies (2) if for any  subset $U$ of $V(G  )$,
	either there is a  vertex $v \in U$ such that
	$m-  m_p(G, (f-\delta_U )) \le m_p(G-v, (f-\delta_U ))$, or
	there is an independent set $I$ of $G$ contained in $U$ with $|I| \ge 2$, such that $G-I$ is 
	$(f-\delta_U)$-paintable.
\end{itemize} 
If $(G,f,m)$ satisfies (1), then   $m_p(G,f) \le m$;
If $(G,f,m)$ satisfies  (2), then $m_p(G,f) \ge m$.
\end{lemma}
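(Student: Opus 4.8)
The two inequalities are really statements about the painting game on a join: $m_p(G,f)\le m$ says $G\diamondplus\overline{K_m}$ is not $f^{(m)}$-paintable, and $m_p(G,f)\ge m$ says $G\diamondplus\overline{K_{m-1}}$ is $f^{(m-1)}$-paintable. So the plan is to produce a winning \emph{first move} for Lister under hypothesis~(1) and a winning \emph{response} for Painter under hypothesis~(2), reading off the outcome of each round from the recursive Definition~\ref{def1}. Write $n=|V(G)|$, and for a fixed $U\subseteq V(G)$ put $t=m_p(G,f-\delta_U)$; I take the witnessing $U$ in~(1), and every $U$ tested in~(2), to be nonempty (the empty set gives only the degenerate move that colours copies of $\overline{K}$, handled separately). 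Everything runs on one bookkeeping remark about a round in which Lister declares $M=U+m'$. Since $G$ and $\overline{K}$ are completely joined, any independent $I\subseteq M$ that Painter colours lies entirely in $U$ or entirely among the chosen copies of $\overline K$. The chosen copies fall from $n$ to $n-1$ tokens while the unchosen keep $n$; I record three reductions. \textbf{(i)} If $I=\{v\}$ with $v\in U$, then in $(G-v)\diamondplus\overline{K}$ every \emph{unchosen} copy has $n$ tokens but degree $|V(G-v)|=n-1$, so Lemma~\ref{lem1} deletes all of them and leaves the chosen copies at $n-1=|V(G-v)|$ tokens, exactly the configuration computing $m_p(G-v,f-\delta_U)$. \textbf{(ii)} If $I\subseteq U$ with $|I|\ge2$, then every copy of $\overline K$ exceeds its degree $|V(G-I)|\le n-2$, so Lemma~\ref{lem1} deletes all copies and leaves $G-I$ carrying $f-\delta_U$. \textbf{(iii)} If $I$ consists of chosen copies, $G$ is untouched and the surviving copies carry at most $n$ tokens.

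\textbf{Proving (1)$\Rightarrow m_p(G,f)\le m$.} Hypothesis~(1)(a) applied to any $v\in U$ gives $m-t\ge m_p(G-v,f-\delta_U)\ge0$, so Lister may legally declare $M=U+(m-t)$, keeping $t$ copies of $\overline K$ in reserve. Every Painter reply reaches a non-paintable position: by reduction~(i) a single $v\in U$ leaves $(G-v)\diamondplus\overline{K_{m-t}}$ in standard form, non-paintable because $m-t\ge m_p(G-v,f-\delta_U)$; by reduction~(ii) an $I\subseteq U$ with $|I|\ge2$ leaves $G-I$ with $f-\delta_U$, non-paintable by~(1)(b); and if Painter colours $k$ chosen copies, reduction~(iii) leaves $G\diamondplus\overline{K_{m-k}}$ with $t$ full and $m-t-k$ token-deficient copies, whose all-full version is non-paintable since $m-k\ge t=m_p(G,f-\delta_U)$, so our position, having only fewer tokens, is non-paintable as well. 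Hence $G\diamondplus\overline{K_m}$ is not $f^{(m)}$-paintable.

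\textbf{Proving (2)$\Rightarrow m_p(G,f)\ge m$.} I induct on $m$, the base $m=1$ being the standing assumption that $G$ is $f$-paintable; note that~(2) for $m$ implies~(2) for $m-1$, since the inequality in its alternative~A only weakens as $m$ drops. Suppose Lister opens with $M=U+m'$ on $G\diamondplus\overline{K_{m-1}}$, so $m'\le m-1$. If $U=\emptyset$, Painter colours all $m'$ chosen copies and lands in $G\diamondplus\overline{K_{m-1-m'}}$, paintable because the induction hypothesis gives $m_p(G,f)\ge m-1>m-1-m'$. If $U\ne\emptyset$, apply~(2): under alternative~B Painter colours the independent $I\subseteq U$ with $|I|\ge2$ and, by reduction~(ii), wins on the paintable graph $G-I$; under alternative~A, with witness $v$ satisfying $m_p(G-v,f-\delta_U)\ge m-t$, Painter colours $v$ when $m'\le m-t-1$, so reduction~(i) leaves $(G-v)\diamondplus\overline{K_{m'}}$, paintable since $m'<m-t\le m_p(G-v,f-\delta_U)$, and otherwise ($m'\ge m-t$) colours all $m'$ chosen copies, reduction~(iii) leaving the full graph $G\diamondplus\overline{K_{m-1-m'}}$, paintable because $m-1-m'\le t-1<m_p(G,f-\delta_U)$. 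In every case Painter reaches a paintable position, so $G\diamondplus\overline{K_{m-1}}$ is $f^{(m-1)}$-paintable.

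\textbf{The main obstacle.} The real content is the bookkeeping of the opening paragraph: after one round the copies of $\overline K$ split into a ``full'' class ($n$ tokens) and a ``deficient'' class ($n-1$ tokens), and neither class alone matches the normalized configuration defining some $m_p(\cdot,\cdot)$. Two facts reconcile this. Lemma~\ref{lem1} prunes any copy whose token count strictly exceeds its degree, which is exactly what happens to the full copies once a vertex of $G$ is coloured (the degree of each copy falls with $|V(G)|$); and the monotonicity principle ``removing tokens cannot help Painter,'' immediate from Definition~\ref{def1}, absorbs the leftover deficient copies in case~(iii). Getting the quantifier over $U$ right, namely excluding the empty set and instead disposing of the purely-$\overline K$ move by the induction on $m$, is the only remaining point that needs care.
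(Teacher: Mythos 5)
Your proof is correct and follows essentially the same route as the paper: for (1) Lister opens with the move $U+(m-m_p(G,f-\delta_U))$ and every Painter reply is driven into a losing standard configuration via Lemma~\ref{lem1}, while for (2) Painter answers Lister's first move $U+m'$ by colouring the large independent set, the witness vertex $v$, or all chosen vertices of $\overline{K_{m-1}}$ according to the same threshold comparison of $m'$ with $m-m_p(G,f-\delta_U)$. If anything, your write-up is slightly more careful than the paper's own: your induction on $m$ handles a first move with $U=\emptyset$, and your token-monotonicity remark handles Painter colouring only some of the chosen copies of $\overline{K}$ --- two cases the paper's proof passes over silently.
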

\begin{proof}
Assume $(G,f,m)$ satisfies (1). Let $U$ be a  subset   of $V(G  )$ 
such that  for any vertex $v \in U$,
$m-  m_p(G, (f-\delta_U )) \ge m_p(G-v, (f-\delta_U ))$, and 
for any independent set $I$ of $G$ contained in $U$ with $|I| \ge 2$, then  $G-I$ is not
$(f-\delta_U)$-paintable.

Let  $m' = m-m_p(G, (f-\delta_U ))$. We shall prove that $U+m'$ is a winning move for Lister in  the 
$f^{(m)}$-painting game on $G \diamondplus \overline{K_m}$, and hence $m_p(G,f) \le m$.

If Painter colours the $m'$ vertices of  $\overline{K_m}$, then the remaining game is  $(f-\delta_U)^{(m-m')}$-painting game on $G \diamondplus \overline{K_{m-m'}}$. As $m-m' = m_p(G, (f-\delta_U ))$, we conclude that Lister has a winning strategy for the remaining game.

If Painter colours a vertex $v \in U$, then  by applying Lemma \ref{lem1} and deleting those vertices in $\overline{K_m}$ whose number of tokens is more than the number of their neighbors,
the remaining game is the
$(f-\delta_U)^{(m')}$-painting game on $(G-v) \diamondplus \overline{K_{m'}}$. As $m' =m-m_p(G, (f-\delta_U )) \ge m_p(G-v, (f-\delta_U ))$, Lister has a winning strategy for the remaining game.

If Painter colours an independent set $I$ of $G$ contained in $U$ with $|I| \ge 
2$, then by applying 
Lemma \ref{lem1} again, the remaining game is the $(f-\delta_U)$-painting 
game on $G-I$. As $G-I$ is not $(f-\delta_U)$-paintable, Lister 
has a winning strategy for the remaining game.

Assume $(G,f,m)$ satisfies (2).  
We prove  Painter has a winning strategy  for the $f^{(m-1)}$-painting game on $G \diamondplus \overline{K_{m-1}}$. 

Let  $U + m'$ be Lister's first  move. 

If there is an independent set $I$ of $G$ contained in $U$ with $|I| \ge 2$ for which $G-I$ is   $(f-\delta_U)$-paintable,
then   Painter will colour $I$ in the first round. By Lemma \ref{lem1}, Painter 
has a winning strategy for the remaining game.

Assume for any independent set $I$ of $G$ contained in $U$ with $|I| \ge 2$ for which $G-I$ is not  $(f-\delta_U)$-paintable. 
As (2) holds,   there is a  vertex $v \in U$ such that
$m-  m_p(G, (f-\delta_U )) \le m_p(G-v, (f-\delta_U ))$.

If $m' \ge m-m_p(G, (f-\delta_U ))$, then
Painter colours the $m'$ vertices of $\overline{K_{m-1}}$. The remaining game is an $(f-\delta_U)^{m-1-m'}$-painting game on $G \diamondplus \overline{K_{m-1-m'}}$. As $m-1-m' <  m_p(G, (f-\delta_U ))$, 
 by definition of $m_p(G, (f-\delta_U ))$, Painter has a winning strategy in the remaining game.
 
Assume $m' < m-m_p(G, (f-\delta_U ))$. Let 
 $v \in U$ be the vertex for which
 $m-  m_p(G, (f-\delta_U )) \le m_p(G-v, (f-\delta_U ))$.
 Painter colours $v$. Applying Lemma \ref{lem1}, the remaining game is the $(f-\delta_U)^{m'}$-painting game on 
$(G-v) \diamondplus \overline{K_{m'}}$. 
As $m' < m-m_p(G, (f-\delta_U )) \le  m_p(G-v, (f-\delta_U ))$, Painter has a winning strategy for the remaining game.
 
This completes the proof of Lemma \ref{lem3}.
\end{proof}

	It is easy to check that if $(G,f,m)$ does not satisfy (2), then $(G,f,m-1)$ satisfies (1), and hence $m_p(G,f) \le m-1$. If $(G,f,m)$ does not satisfy (1), then $(G,f,m+1)$ satisfies (2), and hence $m_p(G,f) \ge m+1$. Thus we have the following corollary.

\begin{corollary}
	\label{cor1}
	Assume  $G$ is   $f$-paintable. For Conditions (1) and (2) defined as in 
	Lemma \ref{lem3}, (1) holds if and only if $m_p(G,f) \le m$ and (2) holds 
	if and only if  $m_p(G,f) \ge m$.
\end{corollary}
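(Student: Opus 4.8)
The plan is to obtain the corollary directly from Lemma \ref{lem3} together with two elementary monotonicity (``shift'') observations already sketched in the paragraph preceding the statement. Lemma \ref{lem3} supplies the two forward implications for free: it asserts that Condition (1) forces $m_p(G,f)\le m$ and that Condition (2) forces $m_p(G,f)\ge m$. Hence only the two converses remain, and I would prove each of them by contraposition, reducing it to a shift observation and then re-applying Lemma \ref{lem3} at a neighbouring value of the parameter.

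First I would verify the two shift observations by mechanically negating the quantified definitions of Conditions (1) and (2). Negating Condition (2) at $(G,f,m)$ yields a set $U\subseteq V(G)$ for which every $v\in U$ satisfies the \emph{strict} inequality $m-m_p(G,f-\delta_U) > m_p(G-v,f-\delta_U)$ and for which every independent set $I\subseteq U$ with $|I|\ge 2$ leaves $G-I$ not $(f-\delta_U)$-paintable. Because all quantities here are integers, this strict inequality is equivalent to $(m-1)-m_p(G,f-\delta_U)\ge m_p(G-v,f-\delta_U)$, and with the same non-paintability clause this is \emph{exactly} Condition (1) at $(G,f,m-1)$, witnessed by the same set $U$. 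A symmetric computation shows that negating Condition (1) at $(G,f,m)$ --- namely, for every $U$ either some $v\in U$ has $m-m_p(G,f-\delta_U) < m_p(G-v,f-\delta_U)$ or some $I\subseteq U$ with $|I|\ge 2$ keeps $G-I$ paintable --- becomes, after the integer identity $a<b\iff a+1\le b$, precisely Condition (2) at $(G,f,m+1)$.

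With both observations in hand the corollary follows by chaining. If $(G,f,m)$ fails (1), the second observation gives (2) at $m+1$, so Lemma \ref{lem3} yields $m_p(G,f)\ge m+1>m$; contrapositively $m_p(G,f)\le m$ implies (1), which combined with the forward implication of Lemma \ref{lem3} gives the equivalence that (1) holds if and only if $m_p(G,f)\le m$. Dually, if $(G,f,m)$ fails (2), the first observation gives (1) at $m-1$, whence Lemma \ref{lem3} yields $m_p(G,f)\le m-1<m$; contrapositively $m_p(G,f)\ge m$ implies (2), giving that (2) holds if and only if $m_p(G,f)\ge m$.

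The only delicate point, and the step I would treat most carefully, is the bookkeeping inside the negations: tracking which inequalities become strict, how the quantifiers over $v$ and over the independent sets $I$ flip under negation, and the fact that the \emph{same} witnessing set $U$ transfers between the $m$ and $m\pm 1$ conditions. Once the integer shift $a<b\iff a+1\le b$ is applied consistently, no further combinatorial or game-theoretic input is needed beyond Lemma \ref{lem3}, and the argument is purely logical.
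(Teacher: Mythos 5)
Your proof is correct and follows essentially the same route as the paper: the paragraph immediately preceding the corollary states exactly your two shift observations (if $(G,f,m)$ fails (2) then $(G,f,m-1)$ satisfies (1), and if $(G,f,m)$ fails (1) then $(G,f,m+1)$ satisfies (2)) and derives the corollary by applying Lemma \ref{lem3} at the shifted parameter. Your careful negation bookkeeping and the integer identity $a<b \iff a+1\le b$ merely fill in the details the paper dismisses as ``easy to check.''
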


\begin{lemma}
	\label{lem4}
	For any graphs $G_i$   and mappings $f_i:V(G_i) \to N$ ($i=1,2$),
	\begin{equation*}
	m_p(G_1\cup G_2 ,f_1\cup f_2)=m_p(G_1, f_1)m_p(G_2, f_2).
	\end{equation*}
\end{lemma}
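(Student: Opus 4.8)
Write $G=G_1\cup G_2$, $f=f_1\cup f_2$, and set $m_1=m_p(G_1,f_1)$, $m_2=m_p(G_2,f_2)$. The plan is to follow the pattern of Lemma~\ref{lem11}, but since the online game has no static analogue of the colour-set count $\Phi(G,L)$, I would replace that count by the recursive description of $m_p$ in Lemma~\ref{lem3} and Corollary~\ref{cor1} (the characterisation of $m_p(G,f)$ via Conditions~(1) and~(2)). The $p$-factor statement then follows from the two-factor case by induction on $p$, exactly as Corollary~\ref{cor4} follows from Lemma~\ref{lem11}. If $m_1=0$ or $m_2=0$ then some $G_i$ is not $f_i$-paintable, hence neither is $G$ (Lister plays only inside that component) and both sides vanish; so assume $m_1,m_2\ge1$, whence $G$ is $f$-paintable and Corollary~\ref{cor1} applies to $G$. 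I would induct on $|V(G_1)|+|V(G_2)|+\sum_v f(v)$. The mechanism is that on $(G_1\cup G_2)\diamondplus\overline{K_m}$ a first move $U+m'$ of Lister splits as $U=U_1\cup U_2$ with $U_i=U\cap V(G_i)$, and Painter's three meaningful replies reduce to strictly smaller instances: colouring the touched apex vertices leaves a game governed by $m_p(G,f-\delta_U)$; colouring one vertex $v\in U_i$ leaves a game governed by $m_p(G-v,\ldots)$, every untouched apex vertex dropping out by Lemma~\ref{lem1}; and colouring an independent set $I\subseteq U$ with $|I|\ge2$ erases every apex vertex and leaves the join-free instance $G-I$. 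For $U\neq\emptyset$ the induction hypothesis factorises these: with $q_i=m_p(G_i,f_i-\delta_{U_i})$ and $r_i(v)=m_p(G_i-v,\ldots)$ one gets $m_p(G,f-\delta_U)=q_1q_2$ and $m_p(G-v,\ldots)=r_i(v)\,q_{3-i}$ for $v\in V(G_i)$, while $G-I=(G_1-I_1)\cup(G_2-I_2)$ is paintable iff both $G_i-I_i$ are.

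A single observation organises the rest: if $U_i\neq\emptyset$ and $q_i<m_i$, then $G_i$ admits a \emph{good witness}, i.e.\ a nonempty independent $W_i\subseteq U_i$ with $G_i-W_i$ still $(f_i-\delta_{U_i})$-paintable. Indeed Corollary~\ref{cor1} gives Condition~(2) for $(G_i,f_i,m_i)$, so either some independent $I_i\subseteq U_i$ of size $\ge2$ has $G_i-I_i$ paintable (take $W_i=I_i$), or some $v\in U_i$ has $r_i(v)\ge m_i-q_i\ge1$ (take $W_i=\{v\}$). For the upper bound $m_p(G,f)\le m_1m_2$ I would verify Condition~(1) for $(G,f,m_1m_2)$ by loading all of Lister's pressure onto one side. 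Corollary~\ref{cor1} supplies a set witnessing Condition~(1) for $(G_1,f_1,m_1)$, which may be taken nonempty since a genuine Lister-winning first move must touch $G_1$; call it $U_1^{\ast}$ and set $U=U_1^{\ast}$, $U_2=\emptyset$, so $q_2=m_2$. Then $m_p(G,f-\delta_U)=q_1m_2$ and $m_p(G-v,\ldots)=r_1(v)m_2$ for $v\in U_1^{\ast}$, and for $I\subseteq U_1^{\ast}$ of size $\ge2$ the graph $G-I=(G_1-I)\cup G_2$ is non-paintable exactly when $G_1-I$ is, because $G_2$ is $f_2$-paintable. Cancelling the common factor $m_2$ in the two numerical clauses, Condition~(1) for $(G,f,m_1m_2)$ becomes Condition~(1) for $(G_1,f_1,m_1)$, which holds; hence $m_p(G,f)\le m_1m_2$.

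The lower bound $m_p(G,f)\ge m_1m_2$ is the crux: I would verify Condition~(2) for $(G,f,m_1m_2)$ at every nonempty $U=U_1\cup U_2$, where Painter wins provided some $v\in U$ gives $m_p(G-v,\ldots)\ge m_1m_2-q_1q_2$ (reply A) or some independent $I\subseteq U$ with $|I|\ge2$ has $G-I$ paintable (reply B). If $q_1q_2\ge m_1m_2$, reply A holds trivially since $m_p(G-v,\ldots)\ge0\ge m_1m_2-q_1q_2$. Otherwise $q_1q_2<m_1m_2$, and since an untouched side contributes $q_i=m_i$, some touched side — say side $1$ — must have $q_1<m_1$, hence a good witness $W_1$. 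If moreover $q_2<m_2$, then side $2$ is touched and also has a good witness $W_2$, and the cross set $I=W_1\cup W_2$, an independent set straddling the two components with $|I|\ge2$ and $G-I$ paintable, is reply B. If instead $q_2\ge m_2$, I apply Condition~(2) to $G_1$ alone (with $U_1\neq\emptyset$): either it yields an independent $I_1\subseteq U_1$ of size $\ge2$ with $G_1-I_1$ paintable, and $I=I_1$ is reply B because $q_2\ge m_2\ge1$ keeps $G_2$ paintable; or it yields $v_1$ with $r_1(v_1)\ge m_1-q_1$, and then $m_p(G-v_1,\ldots)=r_1(v_1)q_2\ge(m_1-q_1)q_2\ge m_1m_2-q_1q_2$, the last step using $q_2\ge m_2$, so reply A holds. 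Thus Condition~(2) holds and $m_p(G,f)\ge m_1m_2$, completing the two bounds.

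The step I expect to be the main obstacle is precisely the cross-component independent set in the lower bound: the disjoint union grants Painter a reply — simultaneously colouring non-adjacent vertices in the two components — that is invisible to either single-component game, and it is exactly this move (not a naive combination $U_1^{\ast}\cup U_2^{\ast}$ of the two single-component Lister strategies, which Painter would defeat by such a cross pair) that must be steered through the case analysis. The delicate boundary is the ``saturated'' regime $q_i\ge m_i$, where a touched side need not furnish a good witness; isolating the dichotomy $q_1q_2\ge m_1m_2$ versus $q_1q_2<m_1m_2$ is what lets the single-component guarantees and the extra cross-component replies be reconciled without appealing to any monotonicity of $m_p$ in $f$.
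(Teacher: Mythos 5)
Your proof is correct and takes essentially the same route as the paper's: both verify Conditions (1) and (2) of Lemma~\ref{lem3} for $(G,f,m_1m_2)$ by induction, factor the subgame values $m_p(G,f-\delta_U)$ and $m_p(G-v,f-\delta_U)$ through the induction hypothesis, load the upper-bound witness entirely onto $G_1$, and use the cross-component independent set as Painter's decisive reply for the lower bound. The only deviations are cosmetic: you induct on $|V(G)|+\sum_v f(v)$ instead of on $m_1m_2$ (arguably safer, since the paper's measure need not visibly decrease), and you organize the lower bound by the dichotomy $m_p(G_1,f_1-\delta_{U_1})\,m_p(G_2,f_2-\delta_{U_2}) \ge m_1m_2$ versus $< m_1m_2$ with ``good witnesses'' extracted from Condition~(2), where the paper splits directly on whether $U$ meets one or both components and obtains the two nonempty witnesses $I_1,I_2$ straight from the recursive definition of paintability (Definition~\ref{def1}).
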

\begin{proof}
	Let $m_i=m_p(G_i,f_i)$ for $i=1,2$ and let $m=m_1m_2$.
 	Let $G=G_1\cup G_2$ and $f=f_1\cup f_2$. We shall prove that $m=m_p(G,f)$. 
 	
  The proof is by induction on $m$.  
	
	If $m=0$, then one of $m_1, m_2$, say $m_1$ is $0$. Then $G_1$ is not 
	$f_1$-paintable, implying that $G$ is not $f$-paintable, and hence 
		$m_p(G,f)=0$.

 	Assume $m > 0$. To prove that   $m_p(G,f) = m$, by Lemma  \ref{lem3}, we 
 	need to show that $(G,f,m)$ satisfies (1) and (2).

By Corollary \ref{cor1},  $(G_1, f_1, m_1)$ satisfies (1). 
We shall prove that $(G,f, m)$ satisfies (1).

	  By Corollary \ref{cor1}, there is a subset $U$  
		of $V(G_1 \diamondplus \overline {K_{m_1}})$ such that for any vertex $v \in U$,
$m_1 -  m_p(G_1, (f_1-\delta_U )) \ge m_p(G_1-v, (f_1-\delta_U ))$ and 
		for any independent set $I$ of $G $ contained in $U$ with $|I| \ge 2$,    $G_1-I$ is not
		 $(f_1-\delta_U)$-paintable.
 Hence
			$m_1m_2-  m_p(G_1, (f_1-\delta_U ))m_2 \ge m_p(G_1-v, (f_1-\delta_U ))m_2$. By induction hypothesis,  
			$m_p(G_1, (f_1-\delta_U ))m_2 = m_p(G, (f-\delta_U ))$ and 
			$m_p(G_1-v, (f_1-\delta_U ))m_2 = m_p(G-v, f-\delta_U)$.
			 Therefore $m-m_p(G, (f-\delta_U )) \ge m_p(G-v, f-\delta_U)$.

		 If $I$ is an independent set of $G $ contained in $U$ with $|I| \ge 2$, then $I$ is an independent set of $G_1$. Hence  $G_1-I$ is not
		 $(f_1-\delta_U)$-paintable, which implies that $G-I$ is not $(f-\delta_U)$-paintable.

   Now we prove that $(G,f,m)$ satisfies (2). 
   
  	 Assume $U$ is a subset of $V(G)$.  
   Assume first that $U\cap V(G_1) =U_1 \neq \emptyset$ and  
   $U\cap V(G_2) =U_2 \neq \emptyset$. Since for $i=1,2$, $G_i$ is $f_i$-paintable, there exists  an non-empty  independent $I_i$ of $G_i$ contained in $U_i$ such that  $G_i-I_i$ is 
   $(f_i-\delta_{U_i})$-paintable. Let $I = I_1 \cup I_2$. Then $G -I $ is 
   $(f -\delta_{U })$-paintable. As $|I| \ge 2$, so  (2)   holds.

   By symmetry, we may assume that $U\cap V(G_2)= \emptyset$. Then  $U=U\cap 
   V(G_1)$. 
   
    By Corollary \ref{cor1}, 
	\begin{itemize}
		\item either there is a  vertex $v \in U$ such that $m_1-m_p(G_1, 
		(f_1-\delta_U)) \le  m_p(G_1-v, (f_1-\delta_U ))$.
		\item or there is an independent set $I$  of $G_1$ contained in 
			$U$ with $|I| \ge 2$, and  $G_1-I$ is $(f_1-\delta_U)$-paintable.
	\end{itemize}
	In the former case, by induction hypothesis, we have $$m_p(G-v, (f-\delta_U 
	))= m_p(G_1-v, (f_1-\delta_U ))m_2 \ge  m_1m_2-m_p(G_1, 
	(f_1-\delta_U))m_2 =m_1m_2 - m_p(G, 
	(f-\delta_U)).$$  So (2) holds.
	
	In the later case, $I$ is also an independent 
	of $G$ and $G-I$ is $(f-\delta_U)$-paintable. So (2) holds.
 
This completes the proof of Lemma \ref{lem4}.
\end{proof}


 \begin{corollary}
 \label{cor5}
 	Assume $G_1, G_2, \ldots, G_p$ are vertex disjoint graphs and $f_i: V(G_i) \to N$ are mappings.  Let
 	$G=G_1\cup G_2 \cup \ldots \cup G_p$    and $f= f_1 \cup f_2 \cup \ldots \cup f_p$. Then $$m_p(G,f)=\prod_{i=1}^pm_p(G_i,f_i).$$
 \end{corollary}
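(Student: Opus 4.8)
The plan is to prove Corollary \ref{cor5} by a routine induction on $p$, using Lemma \ref{lem4} (the two-factor identity $m_p(G_1\cup G_2, f_1\cup f_2)=m_p(G_1,f_1)m_p(G_2,f_2)$) as the only substantive ingredient. This mirrors precisely the way Corollary \ref{cor4} is deduced from Lemma \ref{lem11} in the choosability setting, so I expect the argument to be essentially mechanical.

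First I would record the base case $p=1$, for which the claim $m_p(G_1,f_1)=\prod_{i=1}^{1}m_p(G_i,f_i)$ is a tautology. For the inductive step, assume the formula holds for any $p-1$ pairwise vertex-disjoint graphs. Writing $G'=G_1\cup G_2\cup\cdots\cup G_{p-1}$ and $f'=f_1\cup f_2\cup\cdots\cup f_{p-1}$, the key observation is that because $G_1,\ldots,G_p$ are pairwise vertex-disjoint, the graph $G'$ and the graph $G_p$ are themselves vertex-disjoint, and $G=G'\cup G_p$ with $f=f'\cup f_p$. Hence Lemma \ref{lem4} applies directly to the pair $(G',f')$ and $(G_p,f_p)$, giving
\[
m_p(G,f)=m_p(G'\cup G_p,\, f'\cup f_p)=m_p(G',f')\cdot m_p(G_p,f_p).
\]
Applying the induction hypothesis to $m_p(G',f')=\prod_{i=1}^{p-1}m_p(G_i,f_i)$ then yields
\[
m_p(G,f)=\Big(\prod_{i=1}^{p-1}m_p(G_i,f_i)\Big)\cdot m_p(G_p,f_p)=\prod_{i=1}^{p}m_p(G_i,f_i),
\]
which closes the induction.

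There is no real obstacle here: all the genuine difficulty — the careful analysis via Conditions (1) and (2) of Lemma \ref{lem3}, and the verification (in the proof of Lemma \ref{lem4}) that the product structure is compatible with both the Lister-winning and Painter-winning conditions — has already been dispatched in the two-factor case. The only points worth a sentence of care are that disjoint union is associative and that the restriction $f|_{V(G')}$ coincides with $f'$, so that the decomposition $G=G'\cup G_p$ legitimately satisfies the hypotheses of Lemma \ref{lem4}; both are immediate from the definitions of $\cup$ on graphs and on mappings. I would therefore keep the write-up to a few lines, emphasising the single invocation of Lemma \ref{lem4} per inductive step.
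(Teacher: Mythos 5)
Your proposal is correct and matches the paper's (implicit) argument exactly: the paper states Corollary \ref{cor5} as an immediate consequence of Lemma \ref{lem4}, precisely via the routine induction on $p$ that you spell out, just as Corollary \ref{cor4} follows from Lemma \ref{lem11}. Nothing is missing; your added remarks on associativity of disjoint union and on $f|_{V(G')}=f'$ are the only details one could ask for.
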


To determine $m_c(G,f)$ and/or $m_p(G,f)$
is difficult for even very simple graphs.  Indeed, to determine whether or not $m_c(G,f)=0$ (respectively, $m_p(G,f)=0$) is equivalent to determine if $G$ is not $f$-choosable (respectively, $f$-paintable).
By using Corollary  \ref{cor4} and Corollary \ref{cor5}, we can determine $m_c(G,f)$ and $m_p(G,f)$ for the case that $G$ is the disjoint union of complete graphs.
 Currently, we do not know $m_c(G,f)$ and $m_p(G,f)$ for any other graph $G$, if the mapping $f$ is  arbitrary.
The simplest unknown case is that $G$ is a path on three vertices.

It would be interesting to determine $m_c(P_3,f)$ and $m_p(P_3, f)$ for   arbitrary mappings $f$.

\begin{question}
Let $P_3$ be the path on three vertices. What is $m_c(P_3, f)$ and $m_p(P_3, f)$ for an arbitrary mapping $f$?
\end{question}

\bibliographystyle{unsrt}

\end{document}